\NeedsTeXFormat{LaTeX2e}
\documentclass[12pt]{article} 
\usepackage{amsmath,amsxtra,amssymb,latexsym, amscd,amsthm}

\advance\voffset-1truecm\relax
\advance\hoffset-1.0truecm\relax
\overfullrule=0pt

\font\titbf=cmbx10 scaled \magstep2

\font\tac=cmcsc10 scaled \magstep1

\numberwithin{equation}{section}

\newtheorem{theorem}{Theorem}
\newtheorem{corollary}[theorem]{Corollary}

\newtheorem{lemma}[theorem]{Lemma}
\newtheorem{proposition}[theorem]{Proposition}
\newtheorem{remark}[theorem]{Remark}

\def\C{\Bbb C}

\begin{document}
$\qquad$

\centerline{\titbf   NORMAL CRITERIA FOR}
\medskip

 \centerline{\titbf FAMILIES OF MEROMORPHIC FUNCTIONS }
\medskip
\centerline{\tac {Gerd Dethloff$^a$ and Tran Van Tan$^{b,}$ 
 \noindent \footnote{Corresponding author.\\E-mail addresses: $^a$gerd.dethloff@univ-brest.fr, $^b$tranvantanhn@yahoo.com, $^c$thinmath@gmail.com\\
The second named author is currently Regular Associate Member of ICTP, Trieste, Italy. This research is funded by Vietnam National Foundation for Science and Technology Development (NAFOSTED).}} and Nguyen Van Thin$^c$}

\begin{center} 

 {\small $^{a}$  Universit\'{e} de Brest,
   LMBA, UMR CNRS 6205,\\
6, avenue Le Gorgeu - C.S. 93837,  
   29238 Brest Cedex 3, France }
\end{center}
\begin{center}

{\small $ ^{b}$ Department of Mathematics,
 Hanoi National University of 
Education,\\
136 Xuan Thuy Street, Cau Giay, Hanoi, Vietnam}
\end{center}
\begin{center} 

{\small $ ^{c}$ Department of Mathematics,
 Thai Nguyen University of Education,\\
Luong Ngoc Quyen Street, Thai Nguyen City, Vietnam}
\end{center}

\vskip0.15cm

\begin{abstract}
 \noindent By using Nevanlinna theory, we prove some normality criteria for a family of meromorphic functions under a condition on  differential polynomials generated by the members of the family.

\vskip0.15cm
\noindent
 \textit{Keywords:} Meromorphic function, Normal family, Nevanlinna theory.

\vskip0.15cm
\noindent
Mathematics Subject Classification 2010: 30D35.
\end{abstract}

\section{Introduction}
Let $D$  be a domain in the complex plane $\C$ and $\mathcal F$ be a family of meromorphic functions in $D.$ The family $\mathcal F$ is said to be normal in $D,$ in the sense of Montel, if for any sequence $\{f_v\}\subset \mathcal F,$ there exists a subsequence $\{f_{v_i}\}$ such that $\{f_{v_i}\}$ converges spherically locally uniformly in $D,$ to a meromorphic function or $\infty.$\\

In 1989, Schwick proved:

\noindent{\bf Theorem~A }(\cite{Sch}, Theorem~3.1){\bf.} {\it Let $k, n$ be positive integers such that $n\geq k+3.$  Let $\mathcal F$ be a family of meromorphic functions in a complex domain $D$ such that for every $f\in\mathcal F,$ $(f^n)^{(k)}(z)\ne 1$  for all $z\in D.$  Then $\mathcal F$ is normal on $D.$}

\noindent{\bf Theorem~B }(\cite{Sch}, Theorem~3.2){\bf.} {\it Let $k, n$ be positive integers such that $n\geq k+1.$  Let $\mathcal F$ be a family of entire functions in a complex domain $D$ such that for every $f\in\mathcal F,$ $(f^n)^{(k)}(z)\ne 1$  for all $z\in D.$  Then $\mathcal F$ is normal on $D.$}

The following normality criterion was established by Pang and Zalcman \cite{PZ} in 1999:

\noindent{\bf Theorem~C} (\cite{PZ}){\bf.} {\it Let $n$ and $k$ be natural numbers and $\mathcal F$ be a
family of holomorphic functions in a domain $D$ all of whose zeros have multiplicity at
least $k.$  Assume that $f^nf^{(k)}-1$ is non-vanishing for each $f\in\mathcal F.$ Then $\mathcal F$ is normal
in D.}\\

The main purpose of this paper is to establish some normality criteria for the case of more general
differential polynomials.
Our main results are as follows:

\begin{theorem}\label{Th1}
Take $q \;(q\geq1)$ distinct nonzero complex values $a_1,\dots,a_q,$ and $q$ positive integers (or $+\infty$) $\ell_1,\dots\ell_q.$ Let $n$ be a nonnegative integer, and let $n_1,\dots,n_k, t_1,\dots,t_k$ be positive integers ($k\geq 1$). Let $\mathcal F$ be a family of meromorphic functions in a complex domain $D$ such that for every $f\in\mathcal F$ and for every $m\in\{1,\dots,q\},$ all zeros of $f^n(f^{n_1})^{(t_1)}\cdots(f^{n_k})^{(t_k)}-a_m$ have multiplicity at least $\ell_m.$ Assume that

\noindent$ a)\quad n_j\geq t_j \text{\;for all \;} 1\leqslant j\leqslant k,\;\text{ and\;} \ell_i\geq 2 \text{\;for all\;} 1\leqslant i\leqslant q,$\\
\noindent$ b)\quad \sum_{i=1}^q\frac{1}{\ell_i}<\frac{ qn-2+\sum_{j=1}^kq(n_j-t_j)}{n+\sum_{j=1}^k(n_j+t_j)}.$\\
Then $\mathcal F$ is a normal family.
\end{theorem}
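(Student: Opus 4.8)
The plan is to argue by contradiction via a rescaling (Zalcman--Pang) reduction, transferring the hypothesis to a value-distribution statement on $\C$ that is then ruled out by Nevanlinna theory under (b). Throughout write $\Phi(f):=f^n(f^{n_1})^{(t_1)}\cdots(f^{n_k})^{(t_k)}$ and set $\sigma:=n+\sum_{j=1}^k n_j$, $\tau:=\sum_{j=1}^k t_j$, so (a) gives $\sigma\ge\tau$ and (b) reads $\sum_{i=1}^q 1/\ell_i<(q(\sigma-\tau)-2)/(\sigma+\tau)$; note that since the right-hand side must be positive we automatically have $\sigma>\tau$. Since normality is local, I would suppose $\mathcal F$ is not normal at some $z_0\in D$ and apply the Pang--Zalcman rescaling lemma with the normalization exponent $\alpha:=\tau/\sigma\in(0,1)$ (an admissible range for a general meromorphic family): this produces $f_j\in\mathcal F$, points $z_j\to z_0$ and radii $\rho_j\to 0^+$ such that $g_j(\zeta):=\rho_j^{-\alpha}f_j(z_j+\rho_j\zeta)$ converges spherically locally uniformly on $\C$ to a non-constant meromorphic function $g$ of finite order. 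The choice of $\alpha$ is dictated by the quasi-homogeneity of $\Phi$: a direct computation gives $\Phi(f_j)(z_j+\rho_j\zeta)=\rho_j^{\,\sigma\alpha-\tau}\Phi(g_j)(\zeta)=\Phi(g_j)(\zeta)$, so the rescaled differential polynomials converge to $\Phi(g)$ locally uniformly off the poles of $g$.

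Next I would transfer the multiplicity hypothesis to the limit. For each fixed $m$ we have $\Phi(g_j)-a_m\to\Phi(g)-a_m$, and since every zero of $\Phi(f_j)-a_m=\Phi(g_j)-a_m$ has multiplicity at least $\ell_m$, Hurwitz's theorem forces every zero of $\Phi(g)-a_m$ to have multiplicity at least $\ell_m$, unless $\Phi(g)\equiv a_m$. The degenerate case is handled separately: as $a_m\ne 0$ this makes $\Phi(g)$ free of zeros and poles, whence (using $\sigma>\tau$, so that a zero of $g$ of order $s$ would be a zero of $\Phi(g)$ of order $\sigma s-\tau\ge\sigma-\tau>0$) the function $g$ has no zeros and no poles, and a short logarithmic-derivative argument shows $g$ must be constant, a contradiction. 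Thus it remains to prove that no non-constant meromorphic $g$ on $\C$ can satisfy, for every $m$, that all zeros of $\Phi(g)-a_m$ have multiplicity $\ge\ell_m$ once (b) holds; this is the entire content of the theorem.

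For this last step I would run Nevanlinna theory on $\Phi:=\Phi(g)$. Two estimates are routine. The upper bound $T(r,\Phi)\le(\sigma+\tau)T(r,g)+S(r,g)$ follows from the logarithmic derivative lemma applied to the factors $(g^{n_j})^{(t_j)}/g^{n_j}$ together with the pole count: a pole of $g$ of order $p$ is a pole of $\Phi$ of order $\sigma p+\tau$, so $N(r,\Phi)=\sigma N(r,g)+\tau\overline N(r,g)$ and $\overline N(r,\Phi)=\overline N(r,g)$; this produces exactly the denominator $\sigma+\tau$ of (b). The multiplicity hypothesis gives $\overline N\big(r,1/(\Phi-a_m)\big)\le\frac1{\ell_m}N\big(r,1/(\Phi-a_m)\big)\le\frac1{\ell_m}T(r,\Phi)$. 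Feeding these into the Second Main Theorem for $\Phi$ with the targets $a_1,\dots,a_q$, the aim is to reach $(q(\sigma-\tau)-2)T(r,g)\le(\sigma+\tau)\big(\sum_{m}1/\ell_m\big)T(r,g)+S(r,g)$, which after dividing by $T(r,g)\to\infty$ contradicts (b).

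The main obstacle is extracting the sharp constants $q(\sigma-\tau)$ and $-2$ in this final inequality. The blunt deficiency relation applied to $\Phi$ only yields $\sum_m 1/\ell_m\ge q-1-1/(\sigma+\tau)$, which is too weak in general, so one must exploit the ramification of $\Phi$ carefully. The key is that a zero of $g$ of order $s$ is a zero of $\Phi$ of order $\sigma s-\tau$ (here the strengthened condition $\sigma>\tau$ guarantees $\Phi$ has no poles at the zeros of $g$), so both the zeros and the poles of $\Phi$ are highly ramified, and these contributions must be balanced against the lower bound $T(r,\Phi)\ge\sigma T(r,g)-\tau\,\overline N(r,1/g)+S(r,g)$, which I would derive from the same logarithmic-derivative identity $g^{\sigma}=\Phi\cdot\prod_j g^{n_j}/(g^{n_j})^{(t_j)}$. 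Packaging this balance correctly --- essentially a Hayman/Milloux-type refinement of the Second Main Theorem tailored to $\Phi$ rather than the crude defect inequality --- is the crux; heuristically each of the $q$ targets $a_m$ contributes a full $(\sigma-\tau)T(r,g)$ to the counting functions, while the term $-2$ is the usual Euler-characteristic contribution of the Second Main Theorem.
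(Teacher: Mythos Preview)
Your reduction via Zalcman's lemma with $\alpha=\tau/\sigma$, the identity $\Phi(f_j)(z_j+\rho_j\zeta)=\Phi(g_j)(\zeta)$, and the Hurwitz transfer of the multiplicity hypothesis are exactly the paper's setup. Two small points before the main one: (i) your degenerate case should cover $\Phi(g)\equiv c$ for \emph{any} nonzero constant $c$, not only $c=a_m$ (nothing in Hurwitz prevents $\Phi(g)$ from being a constant different from all the $a_m$); (ii) you never use the hypothesis $\ell_i\ge 2$, which the paper invokes to exclude $g$ linear (if $g=a\zeta+b$ then $\Phi(g)=c(a\zeta+b)^{\sigma-\tau}$ and all roots of $\Phi(g)-a_m$ are simple).

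The genuine gap is the last paragraph. You correctly diagnose that the blunt defect inequality for $\Phi$ yields only $\sum 1/\ell_m\ge q-1-1/(\sigma+\tau)$, which does not match the numerator $q(\sigma-\tau)-2$ in (b), and you gesture at a ``Hayman/Milloux-type refinement'' without performing it. The paper carries this out as a separate lemma (its Lemma~\ref{L3}), and two specific ideas are missing from your sketch. First, one applies the Second Main Theorem to $P:=\Phi(g)$ with the $q+1$ values $0,a_1,\dots,a_q$, not just $a_1,\dots,a_q$; this inserts a term $\overline N(r,1/P)$ on the right side and a coefficient $q$ (rather than $q-1$) on the left. Second, one does not bound $T(r,\Phi)$ from below as you propose, but instead combines the Clunie-type estimate $\sigma\,m(r,1/g)\le m(r,1/P)+S(r,g)=T(r,P)-N(r,1/P)+S(r,g)$ with the pointwise divisor inequality
\[
\sigma\,\nu_{1/g}-\nu_{1/P}+\tfrac1q\,\overline\nu_{1/P}\ \le\ \bigl(\tau+\tfrac1q\bigr)\overline\nu_{1/g},
\]
which is where your observation that a zero of $g$ of order $s$ is a zero of $P$ of order $\sigma s-\tau$ actually gets used. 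The point is that this absorbs \emph{both} the full $N(r,1/P)$ and the truncated $\overline N(r,1/P)$ into $(\tau+1/q)\overline N(r,1/g)$, yielding
\[
(q\sigma-1)\,T(r,g)\ \le\ (q\tau+1)\,\overline N(r,\tfrac1g)+\sum_{m=1}^q\overline N\bigl(r,\tfrac1{P-a_m}\bigr)+S(r,g).
\]
Now bound $\overline N(r,1/g)\le T(r,g)$ and $\overline N(r,1/(P-a_m))\le(1/\ell_m)(\sigma+\tau)T(r,g)$ (your upper bound $T(r,\Phi)\le(\sigma+\tau)T(r,g)+S$), and the inequality $(q(\sigma-\tau)-2)\le(\sigma+\tau)\sum_m 1/\ell_m$ drops out, contradicting (b). Your lower bound $T(r,\Phi)\ge\sigma T(r,g)-\tau\overline N(r,1/g)$ is correct and is essentially equivalent to the Clunie estimate, but by itself it leaves you with an unbounded $\overline N(r,1/\Phi)$ term; it is the divisor inequality above, together with the extra target $0$ in the Second Main Theorem, that closes the argument.
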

Take $q=1$ and $\ell_1=+\infty,$ we get the following corollary of Theorem~\ref{Th1}:
\begin{corollary}\label{H1}
Let $a$ be a nonzero complex value, let $n$ be a nonnegative integer, and $n_1,\dots,n_k,t_1,\dots,t_k$ be positive integers. Let $\mathcal F$ be a family of meromorphic functions in a complex domain $D$ such that for every $f\in\mathcal F,$  $f^n(f^{n_1})^{(t_1)}\cdots(f^{n_k})^{(t_k)}-a$ is nowhere vanishing on $D.$ Assume that 

\noindent$a)$ $n_j\geq t_j \text{\;for all \;} 1\leqslant j\leqslant k,$

\noindent$b)$ $n+\sum_{j=1}^kn_j\geq 3+\sum_{j=1}^kt_j.$\\
Then $\mathcal F$ is normal on $D.$
\end{corollary}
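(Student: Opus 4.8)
The plan is to prove Corollary~\ref{H1} by deriving it as a special case of Theorem~\ref{Th1}, since the corollary explicitly sets $q=1$ and $\ell_1=+\infty$. First I would verify that the hypotheses of Corollary~\ref{H1} do indeed imply the hypotheses of Theorem~\ref{Th1} under this specialization. Hypothesis $a)$ of the corollary, $n_j\geq t_j$ for all $j$, is identical to the first half of hypothesis $a)$ of the theorem. The second half of the theorem's hypothesis $a)$ requires $\ell_i\geq 2$ for all $i$; since here $q=1$ and $\ell_1=+\infty$, this is trivially satisfied.

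The main verification is that hypothesis $b)$ of the corollary implies hypothesis $b)$ of the theorem. With $q=1$ and $\ell_1=+\infty$, the left-hand side of the theorem's inequality $\sum_{i=1}^q \frac{1}{\ell_i}$ becomes $\frac{1}{+\infty}=0$. So I would need to check that the right-hand side is strictly positive, i.e.
\begin{equation}
0<\frac{n-2+\sum_{j=1}^k(n_j-t_j)}{n+\sum_{j=1}^k(n_j+t_j)}.
\end{equation}
The denominator $n+\sum_{j=1}^k(n_j+t_j)$ is positive (as $n_j,t_j\geq 1$ and $k\geq 1$), so the inequality is equivalent to the numerator being positive, namely $n-2+\sum_{j=1}^k(n_j-t_j)>0$, i.e. $n+\sum_{j=1}^k n_j > 2+\sum_{j=1}^k t_j$. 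This is exactly hypothesis $b)$ of the corollary, $n+\sum_{j=1}^k n_j\geq 3+\sum_{j=1}^k t_j$, rewritten (since these are all integers, $\geq 3+\cdots$ is the same as $>2+\cdots$).

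The other point to confirm is that the non-vanishing condition of the corollary matches the multiplicity condition of the theorem. In the theorem, all zeros of $f^n(f^{n_1})^{(t_1)}\cdots(f^{n_k})^{(t_k)}-a_m$ are required to have multiplicity at least $\ell_m$; with $\ell_1=+\infty$, demanding a zero of infinite multiplicity is vacuously equivalent to having no zeros at all, which is precisely the statement that $f^n(f^{n_1})^{(t_1)}\cdots(f^{n_k})^{(t_k)}-a$ is nowhere vanishing. Thus all hypotheses of Theorem~\ref{Th1} hold, and its conclusion---that $\mathcal F$ is normal---gives the corollary.

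Since this is purely a specialization argument, I do not expect a genuine obstacle; the only care needed is the bookkeeping around the convention $\frac{1}{+\infty}=0$ and the translation of the strict inequality $b)$ between its two equivalent integer forms. The entire content of the corollary rests on Theorem~\ref{Th1}, so no independent Nevanlinna-theoretic or Zalcman-lemma argument is required here.
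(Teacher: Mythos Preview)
Your proposal is correct and follows exactly the approach indicated in the paper, which simply states that the corollary is obtained from Theorem~\ref{Th1} by taking $q=1$ and $\ell_1=+\infty$. Your verification that the corollary's hypothesis $b)$ (an integer inequality) is equivalent to the strict positivity of the right-hand side in Theorem~\ref{Th1}'s condition $b)$ is precisely the bookkeeping the paper leaves implicit.
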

We remark that in the case where $n\geq 3,$ condition $a)$ in the above corollary implies condition $b);$  and in the case where  $n=0$ and $k=1,$ Corollary \ref{H1} gives Theorem~A.

For the case of entire functions, we shall prove the following result:
\begin{theorem}\label{Th2}
Take $q \;(q\geq1)$ distinct nonzero complex values $a_1,\dots,a_q,$ and $q$ positive integers $($or $+\infty)$ $\ell_1,\dots\ell_q.$ Let $n$ be a nonnegative integer, and let $n_1,\dots,n_k, t_1,\dots,t_k$ be positive integers $(k\geq 1).$  Let $\mathcal F$ be a family of holomorphic functions in a complex domain $D$ such that for every $f\in\mathcal F$ and for every $m\in\{1,\dots,q\},$ all zeros of $f^n(f^{n_1})^{(t_1)}\cdots(f^{n_k})^{(t_k)}-a_m$ have multiplicity at least $\ell_m.$ Assume that

\noindent$ a)\quad n_j\geq t_j \text{\;for all \;} 1\leqslant j\leqslant k,\;\text{ and\;} \ell_i\geq 2 \text{\;for all\;} 1\leqslant i\leqslant q,$\\
\noindent$ b)\quad \sum_{i=1}^q\frac{1}{\ell_i}<\frac{ qn-1+\sum_{j=1}^kq(n_j-t_j)}{n+\sum_{j=1}^kn_j}.$\\
Then $\mathcal F$ is a normal family.
\end{theorem}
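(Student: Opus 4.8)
The plan is to argue locally and use the Zalcman--Pang rescaling method to reduce the statement to a value-distribution assertion about a single entire function, which is then settled by Nevanlinna theory. Since normality is a local property, it suffices to prove that $\mathcal F$ is normal at an arbitrary point $z_0\in D$; suppose it is not. Writing $\Phi(h):=h^n(h^{n_1})^{(t_1)}\cdots(h^{n_k})^{(t_k)}$ for the differential polynomial in question, I first record that $\Phi$ is quasi-homogeneous of weight $\sum_j t_j$ and degree $n+\sum_j n_j$: setting $\alpha:=\dfrac{t_1+\cdots+t_k}{n+n_1+\cdots+n_k}$, one checks that for $h(z)=\rho^{\alpha}g(\zeta)$ with $z=z_0+\rho\zeta$ the chain rule gives $\Phi(h)(z)=\rho^{\,\alpha(n+\sum n_j)-\sum t_j}\,\Phi(g)(\zeta)=\Phi(g)(\zeta)$. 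By condition $a)$ one has $0<\alpha\le 1$, which lies in the range permitted by the Pang--Zalcman rescaling lemma. Applying that lemma at $z_0$ produces functions $f_v\in\mathcal F$, points $z_v\to z_0$ and radii $\rho_v\to0^{+}$ such that $g_v(\zeta):=\rho_v^{-\alpha}f_v(z_v+\rho_v\zeta)$ converges locally uniformly in the spherical metric to a nonconstant function $g$ on $\C$; since the $f_v$ are holomorphic, $g$ is a nonconstant entire function of finite order.

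By the scaling identity, $\Phi(f_v)(z_v+\rho_v\zeta)=\Phi(g_v)(\zeta)\to\Phi(g)(\zeta)$ locally uniformly. For each $m$ the functions $\Phi(f_v)-a_m$ have only zeros of multiplicity $\ge\ell_m$, so by Hurwitz's theorem either $\Phi(g)\equiv a_m$ or every zero of $\Phi(g)-a_m$ has multiplicity $\ge\ell_m$. The first alternative is impossible: as $a_m\ne0$, $\Phi(g)\equiv a_m$ forces $g$ to be zero-free, so $g=e^{w}$, and then $\Phi(g)=e^{(n+\sum n_j)w}\cdot(\text{polynomial in }w',w'',\dots)$ can be a nonzero constant only if $w$ is constant, contradicting that $g$ is nonconstant. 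Thus the theorem reduces to the assertion I now aim to prove: \emph{under $a)$ and $b)$ there is no nonconstant entire function $g$ for which all zeros of $\Phi(g)-a_m$ have multiplicity $\ge\ell_m$ for every $m$.}

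For this I would set $F:=\Phi(g)$ and combine two structural facts with the second main theorem. First, writing $F=g^{\,n+\sum n_j}\prod_{j}\big((g^{n_j})^{(t_j)}/g^{n_j}\big)$ and invoking the lemma on the logarithmic derivative gives $T(r,F)\le\big(n+\sum_j n_j\big)T(r,g)+S(r,g)$. Second, by $a)$ a zero of $g$ of order $p$ is a zero of $F$ of order $p\,(n+\sum n_j)-\sum t_j\ge n+\sum_j(n_j-t_j)$, while the ``extra'' zeros of $F$ lying off the zeros of $g$ are zeros of the quotients $(g^{n_j})^{(t_j)}/g^{n_j}$ and hence are bounded by $\sum_j t_j\,\overline N(r,1/g)+S(r,g)$. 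I would then apply the second main theorem to $F$ for the values $0,a_1,\dots,a_q,\infty$, using $\overline N(r,F)=0$ (since $g$, hence $F$, is entire), the multiplicity estimate $\overline N\big(r,1/(F-a_m)\big)\le\frac1{\ell_m}N\big(r,1/(F-a_m)\big)\le\frac1{\ell_m}T(r,F)$, and the two facts above. After simplification one is led to an inequality of the form
$$\Big[q\Big(n+\sum_{j}(n_j-t_j)\Big)-1-\Big(\sum_{i}\tfrac1{\ell_i}\Big)\Big(n+\sum_j n_j\Big)\Big]\,T(r,g)\le S(r,g),$$
whose bracket is exactly positive under hypothesis $b)$; since $g$ has finite order, $S(r,g)=o(T(r,g))$ while $T(r,g)\to\infty$, giving the desired contradiction. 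The single ``$-1$'' here, rather than the ``$-2$'' of Theorem~\ref{Th1}, reflects that for an entire $g$ the value $\infty$ is totally deficient for $F$, contributing a full unit of defect that is unavailable in the meromorphic case.

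The delicate point, and the step I expect to be the main obstacle, is this last Nevanlinna bookkeeping: extracting \emph{exactly} the coefficient in $b)$ rather than a weaker one. A crude combination — applying the second main theorem to $F$ alone and estimating $\overline N(r,1/(F-a_m))\le\frac1{\ell_m}T(r,F)$ — yields only $\sum_i 1/\ell_i<q-1$, and a lossy use of the lower bound for $T(r,F)$ produces a coefficient $q-1$ in place of $q$. The full strength of $b)$ requires that the high vanishing order of $F$ at the zeros of $g$ and the control on the off-$g$ zeros be accounted for \emph{without slack}, so that the various $\overline N(r,1/g)$-terms cancel against one another. Carrying out this sharp estimation of the auxiliary zero-counting functions, and verifying that all error terms stay $S(r,g)=o(T(r,g))$ throughout (guaranteed here because the limit $g$ has finite order), is where the real work lies; the remainder is the routine reduction sketched above, which runs in parallel to the meromorphic Theorem~\ref{Th1}.
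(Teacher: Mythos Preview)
Your overall architecture is the paper's: rescale via the Pang--Zalcman lemma with the same exponent $\alpha$, pass to a nonconstant entire limit $g$, transfer the multiplicity hypotheses by Hurwitz, and finish with a Nevanlinna inequality that contradicts $b)$. Two points, however, are genuinely incomplete.

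First, the exclusion of $\Phi(g)\equiv a_m$ is not quite closed. From $a_m\neq0$ and $n+\sum_j(n_j-t_j)>0$ you correctly deduce $g$ is zero-free, hence $g=e^{w}$; but ``$\Phi(g)$ constant $\Rightarrow w$ constant'' is not obvious for a general entire $w$. The paper uses the bounded-spherical-derivative output of Zalcman's lemma together with the Clunie--Hayman bound (order $\le 1$ for entire $g$) to force $w(\xi)=c\xi+d$, after which $\Phi(g)$ is a nonzero multiple of $e^{(n+\sum n_j)c\xi}$ and visibly nonconstant. You should invoke that order bound.

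Second, and more substantially, the ``sharp bookkeeping'' you flag as the obstacle really is one, and your sketch does not contain the mechanism that resolves it. Applying the second main theorem to $F=\Phi(g)$ produces an \emph{upper} bound on $T(r,F)$; to extract a contradiction for $g$ you need to feed in a \emph{lower} bound of the form $T(r,F)\ge (n+\sum_j n_j)T(r,g)-(\sum_j t_j)\overline N(r,1/g)+S(r,g)$, and your two ``structural facts'' (the upper bound $T(r,F)\le(n+\sum_j n_j)T(r,g)+S$ and the zero-order count) do not by themselves deliver this. The paper's device (its Lemma~\ref{L3}) is to start from the pointwise inequality
\[
\frac{1}{|g|^{d(P)}}\le\frac{1}{|P|}\sum_i|\alpha_i|\prod_j\Big|\frac{g^{(j)}}{g}\Big|^{S_{ij}}\quad(|g|\le1),
\]
which after the lemma on the logarithmic derivative gives $d(P)\,m(r,1/g)\le m(r,1/P)+S$; writing $m(r,1/P)=T(r,P)-N(r,1/P)$, applying the second main theorem to $P$ at $0,a_1,\dots,a_q$, and then controlling the combination $d(P)N(r,1/g)-N(r,1/P)+\tfrac1q\overline N(r,1/P)$ by the divisor estimate $d(P)\nu_{1/g}\le\nu_{1/P}+\theta(P)\overline\nu_{1/g}$ is exactly what makes the $\overline N(r,1/g)$-terms cancel ``without slack'' and produces the coefficient $qn-1+\sum_j q(n_j-t_j)$. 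Your proposal identifies the target and the difficulty, but this proximity-function trick is the missing idea that turns the outline into a proof.
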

Take $q=1$ and $\ell_1=+\infty,$  Theorem~\ref{Th2} gives the following generalization of Theorem~B, except for the case $n=k+1$. So for the latter case, we add a new proof of 
Theorem~B in the Appendix which is slightly simpler than the original one.

\begin{corollary}\label{H2}
Let $a$ be a nonzero complex value, let $n$ be a nonnegative integer, and $n_1,\dots,n_k,t_1,\dots,t_k$ be positive integers. Let $\mathcal F$ be a family of holomorphic functions in a complex domain $D$ such that for every $f\in\mathcal F,$  $f^n(f^{n_1})^{(t_1)}\cdots(f^{n_k})^{(t_k)}-a$ is nowhere vanishing on $D.$ Assume that 

\noindent$a)$ $n_j\geq t_j \text{\;for all \;} 1\leqslant j\leqslant k,$

\noindent$b)$ $n+\sum_{j=1}^kn_j\geq 2+\sum_{j=1}^kt_j.$\\
Then $\mathcal F$ is normal on $D.$
\end{corollary}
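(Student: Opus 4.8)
The plan is to obtain Corollary~\ref{H2} as the special case of Theorem~\ref{Th2} in which one takes $q=1$, $a_1=a$, and $\ell_1=+\infty$; since the heavy analytic work is already carried out in Theorem~\ref{Th2}, the only task is to verify that, under this specialization, the hypotheses of the corollary coincide with those of the theorem. First I would note that the convention ``all zeros have multiplicity at least $+\infty$'' means that the function has no zeros at all. Thus the requirement of the corollary that $f^n(f^{n_1})^{(t_1)}\cdots(f^{n_k})^{(t_k)}-a$ be nowhere vanishing on $D$ is precisely the hypothesis of Theorem~\ref{Th2} that all zeros of $f^n(f^{n_1})^{(t_1)}\cdots(f^{n_k})^{(t_k)}-a_1$ have multiplicity at least $\ell_1=+\infty$. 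Both statements concern families of holomorphic functions, so the two settings match.

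Next I would check condition a). In Theorem~\ref{Th2} it asks that $n_j\geq t_j$ for all $j$ and $\ell_i\geq 2$ for all $i$. The first half is condition a) of the corollary verbatim, and the second half is automatic, since the single value $\ell_1=+\infty$ trivially satisfies $\ell_1\geq 2$.

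The one computation to carry out is the translation of condition b). With $q=1$ and $\ell_1=+\infty$, the left-hand side of condition b) in Theorem~\ref{Th2} is
\begin{equation*}
\sum_{i=1}^{q}\frac{1}{\ell_i}=\frac{1}{\ell_1}=0,
\end{equation*}
while its right-hand side becomes
\begin{equation*}
\frac{qn-1+\sum_{j=1}^{k}q(n_j-t_j)}{n+\sum_{j=1}^{k}n_j}=\frac{n-1+\sum_{j=1}^{k}(n_j-t_j)}{n+\sum_{j=1}^{k}n_j}.
\end{equation*}
Because $n_j\geq 1$ and $k\geq 1$, the denominator $n+\sum_{j=1}^{k}n_j$ is strictly positive, so condition b) of Theorem~\ref{Th2} reduces to $n-1+\sum_{j=1}^{k}(n_j-t_j)>0$, that is, $n+\sum_{j=1}^{k}n_j-\sum_{j=1}^{k}t_j>1$. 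Since $n$, the $n_j$, and the $t_j$ are integers, this strict inequality between integers is equivalent to $n+\sum_{j=1}^{k}n_j\geq 2+\sum_{j=1}^{k}t_j$, which is exactly condition b) of the corollary. Hence all hypotheses of Theorem~\ref{Th2} are satisfied and that theorem yields the normality of $\mathcal F$.

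I do not expect any genuine obstacle, since all the substantive arguments are contained in the proof of Theorem~\ref{Th2}; the only point needing care is the last equivalence, where passing from the strict inequality delivered by the theorem to the non-strict inequality b) of the corollary is justified precisely because $n+\sum_{j=1}^{k}n_j-\sum_{j=1}^{k}t_j$ is an integer. As already observed in the text, Theorem~B concerns $(f^{N})^{(K)}$ with $N\geq K+1$, which corresponds to the choice $n=0$, $k=1$, $n_1=N$, $t_1=K$ in the corollary; for this choice condition b) reads $N\geq K+2$. Thus the borderline case $N=K+1$ of Theorem~B lies outside the scope of this specialization and must be treated separately in the Appendix.
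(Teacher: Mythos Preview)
Your proposal is correct and follows exactly the approach indicated in the paper, which simply states that taking $q=1$ and $\ell_1=+\infty$ in Theorem~\ref{Th2} yields the corollary. You have merely spelled out the routine verification of the hypotheses (including the integer argument for condition~b)) that the paper leaves to the reader.
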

In the case where $n\geq 2,$ condition $a)$ in the above corollary implies condition $b).$  

\begin{remark} \label{Re} Our above results remain valid if the monomial $f^n(f^{n_1})^{(t_1)}\cdots(f^{n_k})^{(t_k)}$ is replaced by the following polynomial
  \begin{align*} f^n(f^{n_1})^{(t_1)}\cdots(f^{n_k})^{(t_k)}+\sum_{I}c_If^{n_I}(f^{n_{1I}})^{(t_{1I})}\cdots(f^{n_{kI}})^{(t_{kI})},
\end{align*}
where $c_I$  is a holomorphic function on $D,$ and $n_I,n_{jI},t_{jI}$ are nonnegative integers satisfying $$\alpha_I:=\frac{ \sum_{j=1}t_{jI}}{n_I+\sum_{j=1}^kn_{jI}}<\alpha:=\frac{ \sum_{j=1}t_j}{n+\sum_{j=1}^kn_j}.$$
\end{remark}

\section{Some notations and results of Nevanlinna theory}
 Let $\nu$ be a divisor on  $\C.$ 
The counting function of $\nu$ is defined by
$$N (r,\nu) = \int\limits_1^r \frac{n (t)}{t} dt \  \ (r>1), \text{\ where\ } n(t) =\sum_{\vert
z\vert \le t} \nu (z).$$  For a  meromorphic
function $f$ on $\C$ with $f\not\equiv\infty,$ denote by $\nu_f$ the pole divisor of $f,$ and the divisor $\overline{\nu}_f$ is defined by $\overline{\nu}_f(z):=\min\{\nu_f(z),1\}.$ Set $N(r, f):=N (r,\nu_f)$ and $\overline N(r, f):=N (r,\overline{\nu}_f).$

\noindent   The proximity function of $f$ is defined by
$$m(r, f) =\frac {1} {2\pi} \int\limits_0^{2\pi} \log^+ \big\vert f
(re^{i\theta}) \big\vert d\theta,$$
 where $\log^+ x =\max \{\log \, x, 0 \} \ \ \text { for } x \ge 0.$

\noindent The characteristic function of $f$ is defined by

$$T(r,f):=m(r,f)+N(r,f).$$

\noindent  We state the Lemma on Logarithmic Derivative, the  First and Second Main Theorems of Nevanlinna theory.

{\tac Lemma on Logarithmic Derivative.} {\it Let $f$ be a nonconstant meromorphic function on $\C,$ and let $k$ be a positive integer. Then the equality
$$m(r,\frac{f^{(k)}}{f})=o(T(r,f))$$
holds 
for all $r \in [1,\infty)$ excluding a set of finite Lebesgue measure.}

{\tac First Main Theorem.} {\it Let $f$ be a meromorphic
functions on $\C$ and $a$ be a complex number. Then
$$T(r,\dfrac{1}{f-a})=T(r,f)+O(1).$$}

 {\tac Second Main Theorem.}  {\it Let $f$ be a nonconstant meromorphic function
on $\C$. Let $a_{1},\ldots ,a_{q}$ be $q$ distinct values in $\C$.
 Then
$$ (q-1)T(r,f)\leqslant \overline N(r,f) +\sum\limits_{i=1}^q \overline{N}(r, \frac{1}{f-a_i}) +o(T(r,f)),$$ 
for all $r \in [1,\infty)$ excluding a set of finite Lebesgue measure.}

\section{Proof of our results}
To prove our results, we need the following lemmas:
\begin{lemma}[Zalcman's Lemma, see \cite{Z}]\label{L1}
Let $\mathcal F$ be a family of meromorphic functions defined in the unit disc $\bigtriangleup.$  Then if $\mathcal F$ is not normal at a point $z_0\in\bigtriangleup,$ there exist, for each real number $\alpha$ satisfying $-1<\alpha<1,$

$1)$ a real number $r,\;0<r<1,$

$2)$ points $z_n,\;|z_n|<r,$ $z_n\to z_0,$

$3)$ positive numbers $\rho_n,\rho_n\to 0^+,$

$4)$ functions $f_n,\;f_n\in\mathcal F$

such that
$$g_n(\xi)=\frac{f_n(z_n+\rho_n\xi)}{\rho_n^\alpha}\to g(\xi)$$
spherically uniformly on compact subsets of $\C,$ where $g(\xi)$ is a non-constant meromorphic function and $g^{\#}(\xi)\leqslant g^{\#}(0)=1.$ Moreover, the order of $g$ is not greater than $2.$ Here, as usual, $g^\#(z)=\frac{|g'(z)|}{1+|g(z)|^2}$ is the spherical derivative.
\end{lemma}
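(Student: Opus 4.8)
The plan is to prove the stated direction (non-normality $\Rightarrow$ existence of a rescaling) by Zalcman's renormalization method, whose two ingredients are Marty's criterion and a maximization of a weighted spherical derivative. First I would normalize the situation: after a translation and dilation it suffices to treat a family on the unit disc $\bigtriangleup$ that fails to be normal at $z_0=0$, and to fix once and for all the parameter $\alpha\in(-1,1)$. Recall Marty's criterion: $\mathcal F$ is normal at a point precisely when the spherical derivatives $f^{\#}$ are uniformly bounded on some neighbourhood of that point. Hence non-normality at $0$ supplies, for every $r<1$, functions of $\mathcal F$ whose spherical derivatives on $\{|z|\le r\}$ are arbitrarily large; this is the source of the blow-up that the rescaling will resolve.

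The heart of the argument is the renormalization. Fix $r$ with $\overline{\{|z|\le r\}}\subset\bigtriangleup$. In the classical case $\alpha=0$ one maximizes the weight $(r^2-|z|^2)f^{\#}(z)$, which vanishes on $|z|=r$ and hence attains its maximum at an interior point $z_f$; choosing $f_n$ so that these maxima tend to $\infty$, setting $z_n:=z_{f_n}\to 0$ and $\rho_n:=1/f_n^{\#}(z_n)\to0^{+}$, the maximality inequality
$$(r^2-|z_n+\rho_n\xi|^2)\,f_n^{\#}(z_n+\rho_n\xi)\le (r^2-|z_n|^2)\,f_n^{\#}(z_n)$$
forces, on every fixed compact set,
$$g_n^{\#}(\xi)=\frac{f_n^{\#}(z_n+\rho_n\xi)}{f_n^{\#}(z_n)}\le\frac{r^2-|z_n|^2}{r^2-|z_n+\rho_n\xi|^2}\longrightarrow 1 ,$$
while by construction $g_n^{\#}(0)=1$, where here $g_n(\xi)=f_n(z_n+\rho_n\xi)$. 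For general $\alpha\in(-1,1)$ one runs the same scheme but must incorporate the factor $\rho_n^{-\alpha}$ into the rescaling $g_n(\xi)=\rho_n^{-\alpha}f_n(z_n+\rho_n\xi)$; this requires choosing the scale $\rho_n$ according to the size of $f_n$ near the concentration point (not merely according to its spherical derivative) and replacing $f^{\#}$ by an $\alpha$-adapted functional in the maximization. Verifying that this choice simultaneously yields $g_n^{\#}(0)=1$ and the uniform bound $g_n^{\#}(\xi)\le 1+o(1)$ on compacta is the delicate, calculational core of the proof, and I expect it to be the main obstacle.

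Granting this second step, I would conclude as follows. The $g_n^{\#}$ are uniformly bounded on compact subsets of $\C$, so Marty's criterion makes $\{g_n\}$ normal on $\C$; passing to a subsequence gives $g_n\to g$ spherically locally uniformly, with $g$ meromorphic or $\equiv\infty$. Letting $n\to\infty$ in the bound gives $g^{\#}(\xi)\le 1$ everywhere and $g^{\#}(0)=\lim_n g_n^{\#}(0)=1$; since $g^{\#}(0)\ne 0$ the limit can be neither $\equiv\infty$ nor constant, so $g$ is a non-constant meromorphic function with $g^{\#}(\xi)\le g^{\#}(0)=1$. Finally, from $g^{\#}\le 1$ the Ahlfors--Shimizu characteristic obeys
$$T_0(R,g)=\frac1\pi\int_0^R\frac{dt}{t}\int_{|\zeta|<t}\big(g^{\#}(\zeta)\big)^2\,dm(\zeta)\le\int_0^R t\,dt=\frac{R^2}{2},$$
and since $T_0(R,g)=T(R,g)+O(1)$ this yields order at most $2$, completing the proof.
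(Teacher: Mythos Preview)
The paper does not prove this lemma; it is quoted from Zalcman's survey \cite{Z} and used as a black box, so there is no proof in the paper to compare your attempt against.

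Your outline follows the standard renormalization route, and the parts you actually carry out---the $\alpha=0$ maximization, the passage to the limit via Marty's criterion, and the order-$2$ bound via the Ahlfors--Shimizu characteristic---are correct. The gap is exactly the one you flag yourself: for general $\alpha\in(-1,1)$ you never specify which $\alpha$-adapted functional is to be maximized, how $\rho_n$ is then defined from it, or why the resulting $g_n$ satisfy $g_n^{\#}(0)=1$ and $g_n^{\#}(\xi)\le 1+o(1)$ simultaneously. This is not a routine computation one can wave through: the extension from $\alpha=0$ to $\alpha\ne 0$ (due to Pang, and recounted in \cite{Z}) requires a genuinely different weight involving powers of $|f|$, and checking that both normalizations hold is the substance of the argument. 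As written, you have reduced the lemma to precisely the step you leave undone.
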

\begin{lemma}[see \cite{CH}]\label{L2}
Let $g$ be a entire function and $M$ is a positive constant. If $g^{\#}(\xi)\leqslant M$ for all $\xi\in\C,$ then $g$ has  order at most one.
\end{lemma}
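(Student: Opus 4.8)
The statement is the entire-function case of the classical growth theorem of Clunie and Hayman \cite{CH}: a function of bounded spherical derivative has order at most $2$ in general, and at most $1$ when it is entire. The plan is to recast the conclusion through the Ahlfors--Shimizu characteristic. Writing $A(t)=\frac{1}{\pi}\iint_{|z|\le t}\big(g^{\#}(z)\big)^2\,dx\,dy$ for the (normalized) spherical area of the image $g(\{|z|\le t\})$ counted with multiplicity, one has the Ahlfors--Shimizu characteristic $T_0(r,g)=\int_1^r \frac{A(t)}{t}\,dt$, and it is standard that $T(r,g)=T_0(r,g)+O(1)$. Since the order of $g$ equals $\limsup_{r\to\infty}\frac{\log T(r,g)}{\log r}$, it suffices to prove that $T_0(r,g)=O(r)$, and for this it is enough to show $A(t)=O(t)$.

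First I would record the crude estimate. The hypothesis $g^{\#}\le M$ gives at once $A(t)\le M^2 t^2$, hence $T_0(r,g)=O(r^2)$ and order at most $2$. This bound holds for \emph{any} meromorphic function of bounded spherical derivative; in particular it already recovers the order estimate contained in Zalcman's Lemma (Lemma \ref{L1}). It is sharp in the meromorphic category, so the passage from order $2$ to order $1$ must genuinely use that $g$ is entire, i.e.\ that $g$ omits the value $\infty$.

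The refinement I would pursue uses the area (change of variables) formula to rewrite $\pi A(t)=\int_{\C} n(t,w)\,\frac{dm(w)}{(1+|w|^2)^2}$, where $n(t,w)$ counts the $w$-points of $g$ in $|z|\le t$, and then to control this integral by exploiting that the mass of $g^{\#}$ is concentrated near the level curves $\{|g|\asymp 1\}$: where $|g|$ is large, $g^{\#}\approx |g'|/|g|^2$ is the spherical derivative of $1/g$ and the covering is ``cheap'', while where $|g|$ is small the image lies in a bounded part of the plane. Quantifying that the omission of $\infty$ forces $A(t)=O(t)$ rather than $O(t^2)$ is exactly the Clunie--Hayman estimate; inserting $A(t)=O(t)$ into the first paragraph yields $T_0(r,g)=O(r)$ and hence order at most $1$.

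The hard part is precisely this last step: bounding the covered spherical area $A(t)$ of an $\infty$-omitting holomorphic map from the single pointwise inequality $g^{\#}\le M$. A soft length--area approach is not enough on its own --- the Cauchy--Schwarz inequality $L(t)^2\le 2\pi t\,\frac{d}{dt}\big(\pi A(t)\big)$ (with $L(t)$ the spherical length of $g(\{|z|=t\})$) combined with the spherical isoperimetric inequality runs in the wrong direction, producing lower rather than upper bounds on the growth of $A$. The genuine input must therefore be the holomorphic, value-distribution structure, most naturally exploited through a dyadic decomposition of $\{|z|\le t\}$ according to the size of $|g|$ together with the First Main Theorem, which is the technical heart of \cite{CH}.
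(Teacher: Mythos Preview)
The paper does not give its own proof of this lemma; it simply quotes the result from Clunie--Hayman \cite{CH}. So there is no in-paper argument to compare against, and your proposal should be judged on whether it actually establishes the statement independently.

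Your reduction is correct and standard: passing to the Ahlfors--Shimizu characteristic $T_0(r,g)=\int_1^r A(t)\,t^{-1}\,dt$, using $T(r,g)=T_0(r,g)+O(1)$, and noting that $A(t)=O(t)$ would give $T_0(r,g)=O(r)$ and hence order at most one. The crude bound $A(t)\le M^2 t^2$ from $g^{\#}\le M$ is also correct and yields order at most two for arbitrary meromorphic $g$.

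However, the proposal is not a proof: you explicitly identify the key step---upgrading $A(t)=O(t^2)$ to $A(t)=O(t)$ using that $g$ omits $\infty$---as ``the hard part'' and ``the technical heart of \cite{CH}'', and you do not carry it out. The heuristic paragraph about mass concentration near $\{|g|\asymp 1\}$ and the change-of-variables formula $\pi A(t)=\int_{\C} n(t,w)\,(1+|w|^2)^{-2}\,dm(w)$ points in the right direction, but nothing you write bounds that integral linearly in $t$. Your own final paragraph concedes that the length--area and isoperimetric attempts fail and that one must invoke the value-distribution machinery of \cite{CH}. In the end, then, your write-up is a correct \emph{outline} that still defers the essential estimate to the very reference the paper already cites; as a stand-alone proof it has a genuine, acknowledged gap at the decisive step.
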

\begin{remark}\label{R1}
In Lemma \ref{L1}, if $\mathcal F$ is a family of holomorphic functions, then  by Hurwitz theorem, $g$ is a holomorphic function. Therefore, by Lemma \ref{L2}, the order of $g$ is not greater than $1.$
\end{remark}

We consider a nonconstant meromorphic function $g$ in the complex plane $\C,$ and its first $p$ derivatives. A differential polynomial $P$ of $g$ is defined by 
 $$
P(z):=\sum_{i=1}^n\alpha_i(z)\prod_{j=0}^p(g^{(j)}(z))^{S_{ij}},$$ where $S_{ij} \;(1\leqslant i\leqslant n, \:0\leqslant j\leqslant p )$ are nonnegative integers, and $\alpha_i\not\equiv 0 \;(1\leqslant i\leqslant n)$ are small (with respect to $g$) meromorphic functions.
Set $$d(P):=\min_{1\leqslant i\leqslant n}\sum_{j=0}^pS_{ij}\;\text{and}\; \theta(P):=\max_{1\leqslant i\leqslant n}\sum_{j=0}^pjS_{ij}.$$ 

In 2002, J. Hinchliffe \cite{Hi} generalized  theorems of Hayman \cite{Ha} and Chuang \cite{Ch} and obtained the following result:
\begin{proposition}
Let $g$ be a transcendental meromorphic function, let P(z) be a
non-constant differential polynomial in $g$ with $d(P)\geq 2.$ Then
\begin{align*}
T(r,g)\leqslant\frac{\theta(P)+1}{d(P)-1}\overline{N}(r,\frac{1}{g})+\frac{1}{d(P)-1}\overline{N}(r,\frac{1}{P-1})+o(T(r,g)),
\end{align*}
for all $r\in[1,+\infty)$ excluding a set of finite Lebesgues measure.
\end{proposition}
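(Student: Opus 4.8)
The plan is to bound $T(r,g)$ from above by controlling separately the proximity and the zero-counting of $g$, using $P$ as an intermediary. Throughout write $d=d(P)$, $\theta=\theta(P)$, and let $S(r,g)$ denote any quantity that is $o(T(r,g))$ for $r$ outside a set of finite measure. By the First Main Theorem, $T(r,g)=T(r,1/g)+O(1)=m(r,1/g)+N(r,1/g)+O(1)$, so it suffices to estimate $m(r,1/g)$ and $N(r,1/g)$ in the right proportions.

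First I would reduce the proximity of $1/g$ to that of $1/P$. Since $\frac{1}{g^{d}}=\frac1P\cdot\frac{P}{g^{d}}$ and $m(r,1/g^{d})=d\,m(r,1/g)$, we get $d\,m(r,1/g)\le m(r,1/P)+m(r,P/g^{d})$. Writing each monomial of $P$ as $\alpha_i g^{\sigma_i}\prod_{j\ge1}(g^{(j)}/g)^{S_{ij}}$ with $\sigma_i=\sum_j S_{ij}\ge d$, the Lemma on Logarithmic Derivative controls every factor $g^{(j)}/g$; the only danger is the surviving power $g^{\sigma_i-d}$ when $P$ is not homogeneous. This is neutralised by observing that $\log^+|1/g^{d}|$ is supported on $\{|g|\le1\}$, where $|g^{\sigma_i-d}|\le1$ because $\sigma_i\ge d$, so those powers never enter the proximity integral and effectively $m(r,P/g^{d})=S(r,g)$. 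Thus $d\,m(r,1/g)\le m(r,1/P)+S(r,g)=T(r,P)-N(r,1/P)+S(r,g)$, the last equality by the First Main Theorem applied to $1/P$.

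Next I would feed in the value distribution of $P$. Applying the Second Main Theorem to $P$ at the values $0,1,\infty$ gives $T(r,P)\le\overline N(r,P)+\overline N(r,1/P)+\overline N(r,1/(P-1))+S(r,g)$ (one checks $T(r,P)=O(T(r,g))$, so its error terms are $o(T(r,g))$). Combining with the previous estimate, the two zero-counting terms of $P$ merge into $\overline N(r,1/P)-N(r,1/P)=-\big(N(r,1/P)-\overline N(r,1/P)\big)$, i.e.\ minus the truncated tail of the zeros of $P$; this is precisely what lets SMT be used with coefficient one. Inserting $m(r,1/g)=T(r,g)-N(r,1/g)+O(1)$ yields
$$d\,T(r,g)\le d\,N(r,1/g)-\big(N(r,1/P)-\overline N(r,1/P)\big)+\overline N(r,P)+\overline N(r,1/(P-1))+S(r,g).$$

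The heart of the proof, and the step I expect to be the main obstacle, is the local bookkeeping that turns $d\,N(r,1/g)-\big(N(r,1/P)-\overline N(r,1/P)\big)$ into $(\theta+1)\overline N(r,1/g)$. At a zero $z_0$ of $g$ of order $\mu$, each $g^{(j)}$ vanishes to order $\max(\mu-j,0)$, so every monomial of $P$ vanishes to order at least $\sum_j(\mu-j)S_{ij}=\mu\sigma_i-\sum_j jS_{ij}\ge\mu d-\theta$, whence $\operatorname{ord}_{z_0}P\ge\max(\mu d-\theta,0)$. A short case analysis then shows the net local contribution $d\mu-\max(\operatorname{ord}_{z_0}P-1,0)$ is at most $\theta+1$ in every case, so summing over the zeros of $g$ (other zeros of $P$ only help) gives $d\,N(r,1/g)-\big(N(r,1/P)-\overline N(r,1/P)\big)\le(\theta+1)\overline N(r,1/g)$. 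Finally, the poles of $P$ occur only at poles of $g$, so $\overline N(r,P)\le\overline N(r,g)\le T(r,g)+O(1)$; absorbing this one full $T(r,g)$ on the left turns the coefficient $d$ into $d-1$ and produces $(d-1)T(r,g)\le(\theta+1)\overline N(r,1/g)+\overline N(r,1/(P-1))+S(r,g)$, which is the claim after dividing by $d-1\ge1$. The delicate points are exactly this uniform local estimate, which must yield the constant $\theta+1$ (rather than $\theta$) through the truncation $N-\overline N$, together with the suppression of the non-homogeneous monomials on $\{|g|\le1\}$ in the proximity step.
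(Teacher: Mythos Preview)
Your proposal is correct and follows essentially the same route as the paper (which proves the $q$-value generalization in Lemma~\ref{L3}; the proposition is the case $q=1$): the proximity bound $d\,m(r,1/g)\le m(r,1/P)+S(r,g)$ via the set $\{|g|\le1\}$, the Second Main Theorem for $P$ at $0,1,\infty$, the local divisor inequality $d\,\nu_{1/g}-\nu_{1/P}+\overline\nu_{1/P}\le(\theta+1)\overline\nu_{1/g}$, and finally $\overline N(r,P)\le\overline N(r,g)\le T(r,g)$. The only point you gloss over is that poles of the coefficients $\alpha_i$ can disturb both the local zero estimate and the inclusion $\operatorname{Pole}(P)\subset\operatorname{Pole}(g)$; the paper carries an explicit $\sum_i N(r,\alpha_i)=o(T(r,g))$ correction for this, which is absorbed in $S(r,g)$.
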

In order to prove our results, we now give the following generalization of  the above result:
\begin{lemma}\label{L3}
Let $a_1,\dots,a_q$ be distinct nonzero complex numbers. Let $g$ be a nonconstant meromorphic function, let P(z) be a
nonconstant differential polynomial in $g$ with $d(P)\geq 2.$ Then
\begin{align*}
T(r,g)\leqslant\frac{q\theta(P)+1}{qd(P)-1}\overline{N}(r,\frac{1}{g})+\frac{1}{qd(P)-1}\sum_{j=1}^q\overline{N}(r,\frac{1}{P-a_j})+o(T(r,g)),
\end{align*}
for all $r\in[1,+\infty)$ excluding  a set of finite Lebesgues measure.

\noindent Moreover, in the case where $g$ is a  entire function, we have
\begin{align*}
T(r,g)\leqslant\frac{q\theta(P)+1}{qd(P)}\overline{N}(r,\frac{1}{g})+\frac{1}{qd(P)}\sum_{j=1}^q\overline{N}(r,\frac{1}{P-a_j})+o(T(r,g)),
\end{align*}
for all $r\in[1,+\infty)$ excluding a set of finite Lebesgue measure.
\end{lemma}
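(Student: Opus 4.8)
The plan is to apply the Second Main Theorem to the differential polynomial $P$ itself, using the $q+1$ distinct finite values $0,a_1,\dots,a_q$ (they are pairwise distinct precisely because the $a_j$ are nonzero), together with the Lemma on the Logarithmic Derivative to transfer information from $P$ back to $g$. Write $d:=d(P)\ (\geq 2)$ and $\theta:=\theta(P)$. Since $P$ is a differential polynomial in $g$ with small coefficients, one has $T(r,P)=O(T(r,g))$, so every error term $o(T(r,P))$ may be replaced by $o(T(r,g))$ and the various exceptional sets of finite measure may be merged.

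The key analytic input is a Clunie/logarithmic-derivative estimate:
\[
d\cdot m\Big(r,\tfrac1g\Big)=m\Big(r,\tfrac{1}{g^{d}}\Big)\leq m\Big(r,\tfrac1P\Big)+o(T(r,g)).
\]
To obtain it I would factor $P=g^{d}Q$ with $Q=\sum_i\alpha_i g^{\,d_i-d}\prod_{j}(g^{(j)}/g)^{S_{ij}}$ (where $d_i=\sum_j S_{ij}\ge d$), so that $1/g^{d}=Q/P$; the factors $g^{(j)}/g$ contribute only $o(T(r,g))$ by the Lemma on the Logarithmic Derivative. The one delicate point, and the main obstacle, is controlling $m(r,Q)$ when $P$ is not homogeneous, i.e. when some $d_i>d$: the naive bound produces a term $(d_i-d)\,m(r,g)$ which is not $o(T(r,g))$. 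This is handled by the standard device of splitting the circle $|z|=r$ into the arcs where $|g|<1$ and where $|g|\ge 1$; on the former the higher-degree factors $g^{d_i-d}$ are bounded and only the degree-$d$ part survives, while on the latter $\log^+|1/g^{d}|=0$. In the applications $P=f^{n}(f^{n_1})^{(t_1)}\cdots(f^{n_k})^{(t_k)}$ is homogeneous, so $d_i\equiv d$ and this subtlety disappears.

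Combining the key estimate with the First Main Theorem (in the form $m(r,1/h)=T(r,h)-N(r,1/h)+O(1)$) applied to $g$ and to $P$ yields
\[
d\,T(r,g)\leq T(r,P)-N\Big(r,\tfrac1P\Big)+d\,N\Big(r,\tfrac1g\Big)+o(T(r,g)).
\]
Next I would multiply by $q$ and insert the Second Main Theorem for $P$ with the values $0,a_1,\dots,a_q,\infty$, namely $q\,T(r,P)\le \overline N(r,P)+\overline N(r,1/P)+\sum_{j=1}^q\overline N(r,1/(P-a_j))+o(T(r,g))$. The counting functions are then reconciled by two elementary local computations governed by $\theta$: at a zero of $g$ of order $\mu$ the product rule forces $\operatorname{ord}(P)\ge \mu d-\theta$, whence $N(r,1/P)\ge d\,N(r,1/g)-\theta\,\overline N(r,1/g)$; and $P$ has poles only where $g$ (or some $\alpha_i$) does, so $\overline N(r,P)\le \overline N(r,g)+o(T(r,g))$. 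Splitting $\overline N(r,1/P)-q\,N(r,1/P)$ according to whether a zero of $P$ lies over a zero of $g$ or not, the contribution of the latter is $\le -(q-1)\,\overline N_0(r,1/P)\le 0$ and drops out, while the term $q\,d\,N(r,1/g)$ cancels exactly, leaving
\[
q\,d\,T(r,g)\leq \overline N(r,P)+(q\theta+1)\,\overline N\Big(r,\tfrac1g\Big)+\sum_{j=1}^q\overline N\Big(r,\tfrac1{P-a_j}\Big)+o(T(r,g)).
\]

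Finally I would absorb $\overline N(r,P)$: in the meromorphic case $\overline N(r,P)\le \overline N(r,g)\le N(r,g)\le T(r,g)+O(1)$, so moving it to the left-hand side turns the coefficient $qd$ into $qd-1$ and gives the first inequality of the lemma. In the entire case $g$ has no poles, hence $\overline N(r,P)=0$; the coefficient $qd$ is retained and the second inequality follows at once. I expect the genuinely non-trivial steps to be the logarithmic-derivative estimate in the non-homogeneous case and the precise bookkeeping of multiplicities through the parameter $\theta$; the remainder is formal manipulation of the First and Second Main Theorems.
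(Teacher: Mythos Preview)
Your proposal is correct and follows essentially the same route as the paper's proof. Both arguments rest on the logarithmic-derivative estimate $d\,m(r,1/g)\le m(r,1/P)+o(T(r,g))$ obtained by writing $1/g^{d}=Q/P$ and restricting attention to the set $\{|g|\le 1\}$, then feed the Second Main Theorem for $P$ at $0,a_1,\dots,a_q$ into $d\,T(r,g)\le T(r,P)-N(r,1/P)+d\,N(r,1/g)+o(T(r,g))$, and finally use the local bound $\operatorname{ord}_{z_0}P\ge d\,\mu-\theta$ (up to poles of the $\alpha_i$) to convert $\overline N(r,1/P)-qN(r,1/P)+qd\,N(r,1/g)$ into $(q\theta+1)\overline N(r,1/g)+o(T(r,g))$. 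The only cosmetic difference is that the paper carries the factor $1/q$ throughout (working with the divisor inequality $d\,\nu_{1/g}-\nu_{1/P}+\tfrac1q\overline\nu_{1/P}\le(\theta+\tfrac1q)\overline\nu_{1/g}+\sum_i\nu_{\alpha_i}$), while you multiply by $q$ first; the bookkeeping is otherwise identical, including the treatment of $\overline N(r,P)$ in the meromorphic versus entire cases. One small wording point: in the entire case $\overline N(r,P)$ need not be literally zero because the coefficients $\alpha_i$ may have poles, but it is $o(T(r,g))$, which is all that is needed.
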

\begin{proof}
For any $z$ such that $|g(z)|\leqslant 1,$ since $\sum_{j=0}^pS_{ij}\geq d(P)\;(1\leqslant i\leqslant n),$ we have 
\begin{align*}
\frac{1}{|g(z)|^{d(P)}}&=\frac{1}{|P(z)|}\cdot\frac{|P(z)|}{|g(z)|^{d(P)}}\\
&\leqslant\frac{1}{|P(z)|}\cdot\sum_{i=1}^n\big(|\alpha_i(z)|\prod_{j=0}^p\big|\frac{g^{(j)}(z)}{g(z)}\big|^{S_{ij}}\big).
\end{align*}
This implies that for all $z\in\C,$
\begin{align*}
\log^+\frac{1}{|g(z)|^{d(P)}}\leqslant \log^+\big(\frac{1}{|P(z)|}\cdot\sum_{i=1}^n\big(|\alpha_i(z)|\prod_{j=0}^p\big|\frac{g^{(j)}(z)}{g(z)}\big|^{S_{ij}}\big)\big).
\end{align*}
Therefore, by  the Lemma on Logarithmic Derivative and by the First Main Theorem, we have
\begin{align*}
d(P) m(r,\frac{1}{g})\leqslant m(r,\frac{1}{P})+o(T(r,g))&=T(r,\frac{1}{P})-N(r,\frac{1}{P})+o(T(r,g))\\
&=T(r,P)-N(r,\frac{1}{P})+o(T(r,g)).
\end{align*}
On the other hand, by the Second Main Theorem (used with the $q+1$ different values $0, a_1,...,a_q$) we have
\begin{align*}
qT(r,P)\leqslant\overline{N}(r,P)+\overline{N}(r,\frac{1}{P})+\sum_{j=1}^q\overline{N}(r,\frac{1}{P-a_j})+o(T(r,g)),
\end{align*}
Hence,
\begin{align*}
d(P)m(r,\frac{1}{g})\leqslant\frac{1}{q}\big(\overline{N}(r,P)+\overline{N}(r,\frac{1}{P})&+\sum_{j=1}^q\overline{N}(r,\frac{1}{P-a_j})\big)\\
&-N(r,\frac{1}{P})+o(T(r,g)).
\end{align*}
Therefore, by the First Main Theorem, we have
\begin{align}\label{1}
d(P)T(r, g)&=d(P)T(r,\frac{1}{g})+O(1)\notag\\
&=d(P)m(r,\frac{1}{g})+d(P)N(r,\frac{1}{g})+O(1)\notag\\
&\leqslant \frac{1}{q}\big(\overline{N}(r,P)+\overline{N}(r,\frac{1}{P})+\sum_{j=1}^q\overline{N}(r,\frac{1}{P-a_j})\big)\notag\\
&\;\;\;\;\;\;\;\;\;\;\;+d(P)N(r,\frac{1}{g})-N(r,\frac{1}{P})+o(T(r,g)).
\end{align}
We have
\begin{align*}
\frac{1}{g^{d(P)}}=\frac{1}{P(z)}\sum_{i=1}^n\big(\alpha_ig^{(\sum_{j=0}^pS_{ij})-d(P)}\prod_{j=0}^p(\frac{g^{(j)}}{g})^{S_{ij}}\big).
\end{align*}
(note that $(\sum_{j=0}^pS_{ij})-d(P)\geq 0).$
Therefore,
\begin{align*}
d(P)\nu_{\frac{1}{g}}&\leqslant \nu_{\frac{1}{P}}+\max_{1\leqslant i\leqslant n}\{\nu_{\alpha_i}+\sum_{j=0}^pjS_{ij}\overline{\nu}_{\frac{1}{g}}\}\\
&\leqslant \nu_{\frac{1}{P}}+\sum_{i=1}^n\nu_{\alpha_i}+\theta(P)\overline{\nu}_{\frac{1}{g}},
\end{align*}
where $\nu_\phi$ is the pole divisor of the meromorphic $\phi$ and $\overline{\nu}_\phi:=\min\{\nu_\phi,1\}.$

\noindent This implies,
$$d(P)\nu_{\frac{1}{g}}-\nu_{\frac{1}{P}}+\frac{1}{q}\overline{\nu}_{\frac{1}{P}}\leqslant (\theta(P)+\frac{1}{q})\overline{\nu}_{\frac{1}{g}}+\sum_{i=1}^n\nu_{\alpha_i},$$
(note that for any $z_0,$ if $\nu_{\frac{1}{g}}(z_0)=0$ then $d(P)\nu_{\frac{1}{g}}(z_0)-\nu_{\frac{1}{P}}(z_0)+\frac{1}{q}\overline{\nu}_{\frac{1}{P}}(z_0)\leqslant 0).$
Then,
\begin{align*}
d(P)N(r,\frac{1}{g})-N(r,\frac{1}{P})+\frac{1}{q}\overline{N}(r,\frac{1}{P})&\leqslant (\theta(P)+\frac{1}{q})\overline{N}(r, \frac{1}{g})+\sum_{i=1}^nN(r,\alpha_i)\\
&=(\theta(P)+\frac{1}{q})\overline{N}(r, \frac{1}{g})+o(T(r,g)).
\end{align*}
Combining with (\ref{1}), we have
\begin{align*}
d(P)T(r, g)\leqslant
 \frac{1}{q}\big(\overline{N}(r,P)+\sum_{j=1}^q\overline{N}(r,\frac{1}{P-a_j})\big)+(\theta(P)+\frac{1}{q})\overline{N}(r, \frac{1}{g})+o(T(r,g)).
\end{align*}
On the other hand, by the definition of the differential polynomial $P,$ Pole$(P)\subset\cup_{i=1}^n$ Pole$(\alpha_i)\cup$ Pole$(g).$  Hence (since $\overline{N}(r,\alpha_i) \leq T(r, \alpha_i) = o(T(r,g)$ for $i=1,...,n$), we get
\begin{align}\label{2}
d(P)T(r, g)\leqslant
 \frac{1}{q}\big(\overline{N}(r,g)+\sum_{j=1}^q\overline{N}(r,\frac{1}{P-a_j})\big)+(\theta(P)+\frac{1}{q})\overline{N}(r, \frac{1}{g})+o(T(r,g))\notag\\
\leqslant\frac{1}{q}\big(T(r,g)+\sum_{j=1}^q\overline{N}(r,\frac{1}{P-a_j})\big)+(\theta(P)+\frac{1}{q})\overline{N}(r, \frac{1}{g})+o(T(r,g)).
\end{align}
Therefore,
\begin{align*}
T(r,g)\leqslant\frac{q\theta(P)+1}{qd(P)-1}\overline{N}(r,\frac{1}{g})+\frac{1}{qd(P)-1}\sum_{j=1}^q\overline{N}(r,\frac{1}{P-a_j})+o(T(r,g)).
\end{align*}
In the case where $g$ is an entire function, the first inequality in $(3.2)$ becomes 
\begin{align*}
d(P)T(r, g)\leqslant
 \frac{1}{q}\sum_{j=1}^q\overline{N}(r,\frac{1}{P-a_j})+(\theta(P)+\frac{1}{q})\overline{N}(r, \frac{1}{g})+o(T(r,g)).
\end{align*}
This implies that
\begin{align*}
T(r, g)\leqslant
 \frac{\theta(P)q+1}{qd(P)})\overline{N}(r, \frac{1}{g})+\frac{1}{qd(P)}\sum_{j=1}^q\overline{N}(r,\frac{1}{P-a_j})+o(T(r,g)).
\end{align*}
We have completed the proof of Lemma \ref{L3}.
\end{proof}

\noindent {\bf Proof of Theorem~\ref{Th1}.}
Without loss the generality, we may asssume that $D$ is the unit disc. Suppose that $\mathcal F$ is not normal at $z_0\in D.$  By Lemma \ref{L1}, for $\alpha =\frac{\sum_{j=1}^kt_j}{n+\sum_{j=1}^kn_j}$ there exist

$1)$ a real number $r,\;0<r<1,$

$2)$ points $z_v,\;|z_v|<r,$ $z_v\to z_0,$

$3)$ positive numbers $\rho_v,\rho_v\to 0^+,$

$4)$ functions $f_v,\;f_v\in\mathcal F$

such that
\begin{align}\label{ad1}
g_v(\xi)=\frac{f_v(z_v+\rho_v\xi)}{\rho_v^\alpha}\to g(\xi)
\end{align} 
spherically uniformly on compact subsets of $\C,$ where $g(\xi)$ is a non-constant meromorphic function and $g^{\#}(\xi)\leqslant g^{\#}(0)=1.$

\noindent On the other hand,
\begin{align*}\big(g_v^{n_j}(\xi)\big)^{(t_j)}&=\big((\frac{f_v(z_v+\rho_v\xi)}{\rho_v^\alpha})^{n_j}\big)^{(t_j)}\\
&=\frac{1}{\rho_v^{n_j\alpha-t_j}}(f_v^{n_j})^{(t_j)}(z_v+\rho_v\xi).
\end{align*}
Therefore, by the definition of $\alpha$ and by (\ref{ad1}), we have
\begin{align}\label{ad2}
f_v^{n}(z_v&+\rho_v\xi)(f_v^{n_1})^{(t_1)}(z_v+\rho_v\xi)\cdots(f_v^{n_k})^{(t_k)}(z_v+\rho_v\xi)\notag\\
&=g_v^n(\xi)(g_v^{n_1}(\xi))^{(t_1)}\dots(g_v^{n_k}(\xi))^{(t_k)}\to g^n(\xi)(g^{n_1}(\xi))^{(t_1)}\dots(g^{n_k}(\xi))^{(t_k)}
\end{align}
spherically uniformly on compact subsets of $\C.$

Now, we prove the following claim:

{\bf Claim:}
 {\it $g^n(\xi)(g^{n_1}(\xi))^{(t_1)}\dots(g^{n_k}(\xi))^{(t_k)}$ is non-contstant. }

Since $g$ is non-constant and $n_j\geq t_j \;(j=1,\dots,k),$  it easy to see that $(g^{n_j}(\xi))^{(t_j)}\not\equiv 0,$ for all $j\in\{1,\dots,k\}.$ Hence, $g^n(\xi)(g^{n_1}(\xi))^{(t_1)}\dots(g^{n_k}(\xi))^{(t_k)}\not\equiv 0.$

Suppose that $g^n(\xi)(g^{n_1}(\xi))^{(t_1)}\dots(g^{n_k}(\xi))^{(t_k)}\equiv a,$ $a\in\C\setminus \{0\}.$
We first remark that, from conditions $a),b),$ we have that in the case $n=0,$  there exists $i\in\{1,\dots,k\}$ such that $n_i>t_i.$ Therefore, in both cases ($n=0$ and $n \not= 0$),  since $a\ne 0,$ it is easy to see that $g$ is entire having no zero.  So, by Lemma \ref{L2}, $g(\xi)=e^{c\xi+d},\; c\ne 0.$ Then 
\begin{align*}g^n(\xi)(g^{n_1}(\xi))^{(t_1)}\cdots(g^{n_k}(\xi))^{(t_k)}&=e^{nc\xi+nd}(e^{n_1c\xi+n_1d})^{(t_1)}\cdots (e^{n_kc\xi+n_kd})^{(t_k)}\\
&=(n_1c)^{t_1}\cdots (n_kc)^{t_k}e^{(n+\sum_{j=1}^kn_j)c\xi+(n+\sum_{j=1}^kn_j)d}.
\end{align*} 
Then $(n_1c)^{t_1}\cdots (n_kc)^{t_k}e^{(n+\sum_{j=1}^kn_j)c\xi+(n+\sum_{j=1}^kn_j)d}\equiv a,$ which is impossible. So, $g^n(\xi)(g^{n_1}(\xi))^{(t_1)}\dots(g^{n_k}(\xi))^{(t_k)}$ is nonconstant, which proves the claim.

By the assumption of Theorem~\ref{Th1} and by Hurwitz's theorem, for every $m\in\{1,\dots,q\},$ all zeros of $g(\xi)^n(g^{n_1}(\xi))^{(t_1)}\cdots(g^{n_k}(\xi))^{(t_k)}-a_m$ have multiplicity at least $\ell_m.$

For any $j\in\{1,\cdots,k\},$ we have that $(g^{n_j}(\xi))^{(t_j)}$ is nonconstant. Indeed, if  $(g^{n_j}(\xi))^{(t_j)}$ is constant for some $j\in\{1,\dots,k\},$ then since $n_j\geq t_j,$ and since $g$ is nonconstant, we get that $n_j=t_j$ and $g(\xi)=a\xi+b,$ where $a,b$ are constants, $a\ne 0.$ Thus, we can write$$g(\xi)^n(g^{n_1}(\xi))^{(t_1)}\cdots(g^{n_k}(\xi))^{(t_k)}=c(a\xi+b)^{n+\sum_{j=1}^k(n_j-t_j)},$$
where $c$ is a nonzero constant. This contradicts to the fact that all zeros of  $g(\xi)^n(g^{n_1}(\xi))^{(t_1)}\cdots(g^{n_k}(\xi))^{(t_k)}-a_m$ have multiplicity at least $\ell_m\geq2$ (note that $a_m\ne 0$, and that, by condition b) of Theorem~\ref{Th1}, 
$n+ \sum_{j=1}^k (n_j-t_j) >0$).
 Thus, $(g^{n_j}(\xi))^{(t_j)}$ is nonconstant, for all $j\in\{1,\cdots,k\}.$ 
 
 On the other hand, we can write
 $${(g^{n_j})}^{(t_j)}=\sum c_{m_0,m_1,...,m_{t_j}}g^{m_0}{(g')}^{m_1}\dots{(g^{(t_j)})}^{m_{t_j}},$$
 $c_{m_0,m_1,...,m_{t_j}}$ are  constants, and $m_0, m_1,\dots, m_{t_j}$ are  nonnegative integers such that $ m_0+\dots+m_{t_j}=n_j,\sum_{j=1}^{t_j}{jm_j}=t_j.$
Thus, by an easy computation, we get that $d(P)=n+\sum_{j=1}^kn_j, \theta(P)=\sum_{j=1}^kt_j.$ 

Now, we apply Lemma \ref{L3} for the differential polynomial 
$$P=g(\xi)^n(g^{n_1}(\xi))^{(t_1)}\cdots(g^{n_k}(\xi))^{(t_k)}.$$

By Lemma \ref{L3}, we have (note that, by condition b) of Theorem~\ref{Th1}, 
$n+ \sum_{j=1}^k n_j \geq 2$)
\begin{align}\label{ad3}
T(r,g)&\leqslant\frac{q\sum_{j=1}^kt_j+1}{qn+q\sum_{j=1}^kn_j-1}\overline{N}(r,\frac{1}{g})\notag\\
&\quad\quad+\frac{1}{qn+q\sum_{j=1}^kn_j-1}\sum_{m=1}^q\overline{N}(r,\frac{1}{P-a_m})+o(T(r,g)).
\end{align}
For any $m\in\{1,\dots,q\},$ we have, by the First Main Theorem, 
\begin{align}\label{ad4}
\overline{N}(r,\frac{1}{P-a_m})&=\overline{N}(r,\frac{1}{g^n(g^{n_1})^{(t_1)}\cdots(g^{n_k})^{(t_k)}-a_m})\notag\\
&\leqslant\frac{1}{\ell_m}N(r,\frac{1}{g^n(g^{n_1})^{(t_1)}\cdots(g^{n_k})^{(t_k)}-a_m})\notag\\
&\leqslant\frac{1}{\ell_m}T(r,g^n(g^{n_1})^{(t_1)}\cdots(g^{n_k})^{(t_k)})+O(1)\notag\\
&=\frac{1}{\ell_m}m(r,g^n(g^{n_1})^{(t_1)}\cdots(g^{n_k})^{(t_k)})\notag\\
&\quad\quad+\frac{1}{\ell_m}N(r,g^n(g^{n_1})^{(t_1)}\cdots(g^{n_k})^{(t_k)})+O(1).
\end{align}
By the Lemma on Logarithmic Derivative and by the First Main Theorem,
\begin{align}\label{ad5}
m(r,&g^n(g^{n_1})^{(t_1)}\cdots(g^{n_k})^{(t_k)})+N(r,g^n(g^{n_1})^{(t_1)}\cdots(g^{n_k})^{(t_k)})\notag\\
&\leqslant m(r,\frac{g^n(g^{n_1})^{(t_1)}\cdots(g^{n_k})^{(t_k)}}{g^ng^{n_1}\cdots g^{n_k}})+m(r,g^ng^{n_1}\cdots g^{n_k})\notag\\
&\quad\quad\quad\quad+N(r,g^n(g^{n_1})^{(t_1)}\cdots(g^{n_k})^{(t_k)})\notag\\
&\leqslant (n+\sum_{j=1}^kn_j)m(r,g)+N(r,g^n(g^{n_1})^{(t_1)}\cdots(g^{n_k})^{(t_k)})+o(T(r,g))\notag\\
&= (n+\sum_{j=1}^kn_j)m(r,g)+(n+\sum_{j=1}^kn_j)N(r,g)+(\sum_{j=1}^k t_j)\overline{N}(r,g)+o(T(r,g))\notag\\
&\leqslant (n+\sum_{j=1}^kn_j)T(r,g)+ (\sum_{j=1}^kt_j)\overline{N}(r,g)+o(T(r,g)).
\end{align}
Combining with (\ref{ad4}), for all $m\in\{1,\dots,q\}$ we have
\begin{align}\label{ad6}
\overline{N}(r,\frac{1}{P-a_m})&\leqslant\frac{1}{\ell_m}(n+\sum_{j=1}^kn_j)T(r,g)+\frac{1}{\ell_m}(\sum_{j=1}^k t_j)\overline{N}(r,g)+o(T(r,g))\notag\\
&\leq\frac{1}{\ell_m}(n+\sum_{j=1}^kn_j+\sum_{j=1}^kt_j)T(r,g)+o(T(r,g)). 
\end{align}
Therefore, by (\ref{ad3}) and by the First Main Theorem, we have
\begin{align*}
(qn+q\sum_{j=1}^kn_j-1)T(r,g)\leqslant(q\sum_{j=1}^kt_j+1)\overline{N}(r,\frac{1}{g})+\sum_{m=1}^q\overline{N}(r,\frac{1}{P-a_m})+o(T(r,g))\\
\leqslant(q\sum_{j=1}^kt_j+1)T(r,g)+(n+\sum_{j=1}^kn_j+\sum_{j=1}^kt_j)(\sum_{m=1}^q\frac{1}{\ell_m})T(r,g)+o(T(r,g).
\end{align*}

\noindent This implies that
\begin{align*}
\frac{ qn+\sum_{j=1}^kq(n_j-t_j)-2}{n+\sum_{j=1}^k(n_j+t_j)}T(r,g)\leqslant \sum_{m=1}^q\frac{1}{\ell_m}T(r,g)+o(T(r,g)).
\end{align*}
Combining with  assumption $b)$ we get that $g$ is constant. This is a contradiction. Hence $\mathcal F$ is a normal family. We have completed the proof of Theorem~\ref{Th1}.
\hfill$\Box$

We can obtain Theorem~\ref{Th2} by an argument similar to the the proof of Theorem~\ref{Th1}:
We first remark that although condition b) of Theorem~\ref{Th2} is different from condition b)
of Theorem~\ref{Th1}, whereever it has been used in the proof of Theorem~\ref{Th1} before equation (\ref{ad3}), the condition
b) of Theorem~\ref{Th2} still allows the same conclusion. And from equation (\ref{ad3}) on we modify as follows : 
 Since $\mathcal F$ is a family of holomorphic functions and by Remark \ref{R1}, $g$ is an entire functions. So, similarly to (\ref{ad3}), by Lemma \ref{L3}, we have
\begin{align}\label{ad3a}
T(r,g)\leqslant\frac{q\sum_{j=1}^kt_j+1}{qn+q\sum_{j=1}^kn_j}\overline{N}(r,\frac{1}{g})+\frac{1}{q(n+\sum_{j=1}^kn_j)}\sum_{m=1}^q\overline{N}(r,\frac{1}{P-a_m})+o(T(r,g))\notag\\
\leqslant\frac{q\sum_{j=1}^kt_j+1}{qn+q\sum_{j=1}^kn_j}T(r,g)+\frac{1}{q(n+\sum_{j=1}^kn_j)}\sum_{m=1}^q\overline{N}(r,\frac{1}{P-a_m})+o(T(r,g)).
\end{align}
Since g is a holomorphic function,  $\overline{N}(r,g)=0$. Therefore, by (\ref{ad4}) and (\ref{ad5})
(which remain unchanged), we have
\begin{align}\label{ad5a}
\overline{N}(r,\frac{1}{P-a_m})\leqslant\frac{1}{\ell_m}(n+\sum_{j=1}^kn_j)T(r,g)+o(T(r,g)). 
\end{align}
By  (\ref{ad3a}), (\ref{ad5a}), we have
\begin{align*}
\frac{ qn+\sum_{j=1}^kq(n_j-t_j)-1}{n+\sum_{j=1}^kn_j}T(r,g)\leqslant \sum_{m=1}^q\frac{1}{\ell_m}T(r,g)+o(T(r,g)).
\end{align*}
Combining with  assumption $b)$ of Theorem~\ref{Th2},  we get that $g$ is constant. This is a contradiction. We have completed the proof of Theorem~\ref{Th2}.
\hfill$\Box$\\

In connection with Remark \ref{Re}, we note that the proofs of Theorem~\ref{Th1} and 
Theorem~\ref{Th2} remain valid for the case where the monomial $f^n(f^{n_1})^{(t_1)}\cdots(f^{n_k})^{(t_k)}$ is replaced by the following polynomial
  \begin{align*} f^n(f^{n_1})^{(t_1)}\cdots(f^{n_k})^{(t_k)}+\sum_{I}c_If^{n_I}(f^{n_{1I}})^{(t_{1I})}\cdots(f^{n_{kI}})^{(t_{kI})},
\end{align*}
where $c_I$  is a holomorphic function on $D,$ and $n_I,n_{jI},t_{jI}$ are nonnegative integers satisfying $$\alpha_I:=\frac{ \sum_{j=1}t_{jI}}{n_I+\sum_{j=1}^kn_{jI}}<\alpha:=\frac{ \sum_{j=1}t_j}{n+\sum_{j=1}^kn_j}.$$
In fact, since  $\alpha_I<\alpha$ and by (\ref{ad1}), we get
\begin{align*}
{g_I}_v(\xi):=\frac{f_v(z_v+\rho_v\xi)}{\rho_v^{\alpha_I}}=\rho_v^{\alpha-\alpha_I}g_v(\xi)\to 0,
\end{align*} 
spherically uniformly on compact subsets of $\C.$

\noindent Therefore, similarly to (\ref{ad2})
\begin{align*}
c_I(z_v+\rho_v\xi)f_v^{n_I}(z_v&+\rho_v\xi)(f_v^{n_{1I}})^{(t_{1I})}(z_v+\rho_v\xi)\cdots(f_v^{n_{kI}})^{(t_{kI})}(z_v+\rho_v\xi)\notag\\
&=c_I(z_v+\rho_v\xi){g_I}_v^{n_I}(\xi)(g_v^{n_{1I}}(\xi))^{(t_{1I})}\dots({g_I}_v^{n_{Ik}}(\xi))^{(t_{kI})}\to 0,
\end{align*}
spherically uniformly on compact subsets of $\C.$

\noindent This implies that
\begin{align}\label{ad2a}
f_v^{n}(z_v&+\rho_v\xi)(f_v^{n_1})^{(t_1)}(z_v+\rho_v\xi)\cdots(f_v^{n_k})^{(t_k)}(z_v+\rho_v\xi)\notag\\
&+\sum_{I}c_I(z_v+\rho_v\xi)f_v^{n_I}(z_v+\rho_v\xi)(f_v^{n_{1I}})^{(t_{1I})}(z_v+\rho_v\xi)\cdots(f_v^{n_{kI}})^{(t_{kI})}(z_v+\rho_v\xi)\notag\\
&=g_v^n(\xi)(g_v^{n_1}(\xi))^{(t_1)}\dots(g_v^{n_k}(\xi))^{(t_k)}\notag\\
&\quad\quad+\sum_{I}c_I(z_v+\rho_v\xi){g_I}_v^{n_I}(\xi)({g_I}_v^{n_{1I}}(\xi))^{(t_{1I})}\dots({g_I}_v^{n_{Ik}}(\xi))^{(t_{kI})}\notag\\
&\to g^n(\xi)(g^{n_1}(\xi))^{(t_1)}\dots(g^{n_k}(\xi))^{(t_k)}.
\end{align}
spherically uniformly on compact subsets of $\C.$

We use again the proofs of Theorem~\ref{Th1} and Theorem~\ref{Th2} for the general case above after changing (\ref{ad2}) by (\ref{ad2a}).\hfill$\Box$

\section{Appendix}

Using our methods above, we give a slightly simpler proof of the case of Theorem~B above which did not follow from our Corollary~\ref{H2}:
\begin{theorem}[\cite{Sch}, Theorem 3.2, case $n=k+1$]\label{add} Let $k$ be a positive integer and $a$ be a nonzero constant.  Let $\mathcal F$ be a family of entire functions in a complex domain $D$ such that for every $f\in\mathcal F,$ $(f^{k+1})^{(k)}(z)\ne a$  for all $z\in D.$  Then $\mathcal F$ is normal on $D.$
\end{theorem}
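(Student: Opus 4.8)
The plan is to mimic the Zalcman rescaling used for Theorem~\ref{Th1} and then, because we are now exactly at the boundary where Lemma~\ref{L3} no longer yields a contradiction, to exploit the explicit structure of the limit function rather than a pure value-distribution inequality. Assuming (without loss of generality $D$ is the unit disc) that $\mathcal F$ is not normal at some $z_0$, I would apply Lemma~\ref{L1} with $\alpha=\frac{k}{k+1}\in(-1,1)$. This produces $f_v\in\mathcal F$, $z_v\to z_0$, $\rho_v\to 0^+$ and a nonconstant limit $g$ with $g_v(\xi)=\rho_v^{-\alpha}f_v(z_v+\rho_v\xi)\to g(\xi)$. Since the $f_v$ are holomorphic, Remark~\ref{R1} gives that $g$ is entire of order at most $1$. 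The choice of $\alpha$ is exactly the one making $(k+1)\alpha-k=0$, so that $(g_v^{k+1})^{(k)}(\xi)=(f_v^{k+1})^{(k)}(z_v+\rho_v\xi)\to (g^{k+1})^{(k)}(\xi)$ locally uniformly on compact subsets of $\C$. As each $(f_v^{k+1})^{(k)}-a$ is nowhere vanishing, Hurwitz's theorem forces $(g^{k+1})^{(k)}-a$ to be either nowhere vanishing or identically zero; the latter would make $g^{k+1}$ a polynomial of degree $k$, impossible since $g$ is nonconstant entire (so $g^{k+1}$ is either transcendental or of degree $\geq k+1$). Hence $(g^{k+1})^{(k)}-a$ has no zeros.

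Write $h:=g^{k+1}$, so $h^{(k)}-a$ is a nowhere-vanishing entire function of order at most $1$; by Hadamard it equals $e^{c\xi+d}$ for constants $c,d$. If $c=0$ then $h^{(k)}$ is constant and $h$ is a polynomial of degree $\le k$, again incompatible with $h=g^{k+1}$ and $g$ nonconstant, so $c\neq 0$. Integrating $h^{(k)}=a+e^{c\xi+d}$ exactly $k$ times gives
\begin{align*}
h(\xi)=\tfrac{1}{c^{k}}e^{c\xi+d}+P_0(\xi),
\end{align*}
where $P_0$ is a polynomial of degree exactly $k$ (its leading coefficient is $a/k!\neq 0$). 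The key algebraic step is then to eliminate the exponential: since $h'=\tfrac{1}{c^{k-1}}e^{c\xi+d}+P_0'$, the combination $ch-h'$ cancels the exponential, so that
\begin{align*}
ch-h'=cP_0-P_0'=:\widetilde R,
\end{align*}
a polynomial of degree exactly $k$. Substituting $h=g^{k+1}$ and $h'=(k+1)g^kg'$ turns this into the identity $g^k\big(cg-(k+1)g'\big)=\widetilde R$.

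From here I would extract the contradiction purely from this identity. Because $\widetilde R\not\equiv 0$ has degree $k$, the factor $g^k$ can divide it only if $g$ has finitely many zeros; more precisely a zero of $g$ of multiplicity $\mu$ forces a zero of $\widetilde R$ of multiplicity at least $k\mu$, and since $\deg\widetilde R=k$ this permits at most a single simple zero of $g$. Thus $g$ has finitely many zeros and, being of order at most $1$, Hadamard's factorization gives $g(\xi)=Q(\xi)e^{u\xi+v}$ with $Q$ a polynomial of degree at most $1$. Plugging this form back into $g^k\big(cg-(k+1)g'\big)=\widetilde R$, the left-hand side becomes a polynomial multiple of $e^{(k+1)(u\xi+v)}$ while the right-hand side is a polynomial. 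If $u\neq0$ this forces that polynomial coefficient to vanish identically, whence the left-hand side is $0$ and $\widetilde R\equiv0$, a contradiction; and if $u=0$ then $g$ is a polynomial, so $h^{(k)}$ would be a polynomial, contradicting $h^{(k)}=a+e^{c\xi+d}$. In every case we reach a contradiction, so $g$ must be constant, contradicting Lemma~\ref{L1}; hence $\mathcal F$ is normal.

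I expect the main obstacle to be precisely the failure of Lemma~\ref{L3} at this boundary: with $q=1$, $d(P)=k+1$, $\theta(P)=k$ and $P-a$ nonvanishing, the inequality degenerates to $T(r,g)\le \overline N(r,\tfrac1g)+o(T(r,g))$, which is satisfiable (for instance by functions like $e^{\xi}-1$) and therefore carries no contradiction. The substantive new ingredient is the elimination identity $ch-h'=\widetilde R$, which converts the transcendental hypothesis that $(g^{k+1})^{(k)}$ omits $a$ into a rigid polynomial constraint, after which the finiteness of the zero set and Hadamard factorization close the argument. Some care is needed to ensure $g$ has order at most $1$, so that the nowhere-vanishing factor is $e^{c\xi+d}$ with \emph{constant} $c$; this is exactly what Remark~\ref{R1} provides.
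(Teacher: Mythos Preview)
Your argument is correct, but it takes a genuinely different route from the paper's. Both proofs start identically---Zalcman rescaling with $\alpha=\frac{k}{k+1}$, passage to the limit, and Hurwitz's dichotomy for $(g^{k+1})^{(k)}-a$---but then diverge. The paper invokes Hennekemper's theorem (Lemma~\ref{Hen}): for transcendental entire $g$, $(g^{k+1})^{(k)}$ assumes every nonzero value, and the polynomial case is trivial; this forces the Hurwitz alternative $(g^{k+1})^{(k)}\equiv a$, which is then dispatched via $g=e^{c\xi+d}$. You instead dispose of $(g^{k+1})^{(k)}\equiv a$ first by the easy degree observation, and then spend the real effort on the case $(g^{k+1})^{(k)}\ne a$: the elimination identity $ch-h'=cP_0-P_0'$ is the new idea, converting the omitted-value hypothesis into the polynomial identity $g^{k}\big(cg-(k+1)g'\big)=\widetilde R$ with $\deg\widetilde R=k$, from which the zero count of $g$ and Hadamard factorization give the contradiction.

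In effect you are reproving, from scratch, the order~$\le 1$ case of Hennekemper's lemma. This buys self-containedness---no external value-distribution result is cited---at the cost of a longer computation. The paper's proof is shorter precisely because it outsources this step to \cite{He}. Your diagnosis that Lemma~\ref{L3} degenerates here (yielding only $T(r,g)\le \overline N(r,\tfrac{1}{g})+o(T(r,g))$) is also exactly why the paper isolates this case in an appendix.
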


In order to prove the above theorem we need the following lemma:
\begin{lemma}[\cite{He}] \label{Hen} Let g be a transcendental holomorphic function on the complex plane $\C,$ and $k$ be a positive integer. Then $(g^{k+1})^{(k)}$ assumes every nonzero value infinitely often.
\end{lemma}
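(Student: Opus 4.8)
The plan is to argue by contradiction. Fix a nonzero value $a$ and suppose that $(g^{k+1})^{(k)}-a$ has only finitely many zeros. Write $f:=g^{k+1}$ and $P:=f^{(k)}=(g^{k+1})^{(k)}$; then $f$ is transcendental entire, every zero of $f$ has multiplicity a multiple of $k+1$ (hence $\geqslant k+1$), and the assumption reads $\overline N(r,\frac{1}{P-a})=O(\log r)=o(T(r,g))$, since $g$ is transcendental. As $P=(g^{k+1})^{(k)}$ is a differential polynomial in $g$ with $d(P)=k+1$ and $\theta(P)=k$, the entire case of Lemma~\ref{L3} (with $q=1$) already gives
\begin{align*}
T(r,g)\leqslant \overline N\Big(r,\frac1g\Big)+\frac{1}{k+1}\overline N\Big(r,\frac{1}{P-a}\Big)+o(T(r,g)),
\end{align*}
so that, under the assumption, $\overline N(r,\frac1g)=T(r,g)+o(T(r,g))$ and $N(r,\frac1g)=\overline N(r,\frac1g)+o(T(r,g))$. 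This is exactly the borderline that prevents Theorem~\ref{add} from following from Corollary~\ref{H2}: it says $g$ has, asymptotically, only simple zeros and no deficiency at $0$, so the first-order estimate of Lemma~\ref{L3} alone cannot finish.

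The next step is to relate $f$ to its derivative $P=f^{(k)}$. Writing $f=P\cdot\frac{f}{f^{(k)}}$ and using $m(r,f)=T(r,f)$ and $m(r,P)=T(r,P)$ (both $f$ and $P$ are entire), the Lemma on Logarithmic Derivative together with the First Main Theorem yields
\begin{align*}
T(r,f)\leqslant T(r,P)+k\,\overline N\Big(r,\frac1f\Big)+o(T(r,f)),
\end{align*}
since the only poles of $f^{(k)}/f$ lie over the zeros of $f$, each of order exactly $k$ (every zero of $f$ has multiplicity $\geqslant k+1>k$). I would then apply the Second Main Theorem to the entire function $P$ with the three values $0,a,\infty$; because $\overline N(r,P)=0$ and $\overline N(r,\frac{1}{P-a})=o(T)$, this gives $T(r,P)\leqslant \overline N(r,\frac1P)+o(T(r,P))$. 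Splitting $\overline N(r,\frac1P)$ into the contribution $\overline N(r,\frac1f)=\overline N(r,\frac1g)$ of zeros of $P$ lying over zeros of $f$ and the contribution $\overline N_0(r,\frac1P)$ of the remaining ``extra'' zeros of $P$, and using the multiplicity bound $\overline N(r,\frac1f)\leqslant\frac{1}{k+1}N(r,\frac1f)\leqslant\frac{1}{k+1}T(r,f)$, the above estimates combine to
\begin{align*}
T(r,f)\leqslant T(r,f)+\overline N_0\Big(r,\frac1P\Big)+o(T(r,f)).
\end{align*}

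The main obstacle is precisely this last inequality: because the case $n=k+1$ is borderline, the straightforward combination only controls matters up to the extra zeros $\overline N_0(r,\frac1P)$ of $f^{(k)}$ lying off the zero set of $f$, and so does not by itself yield a contradiction. To close the gap I would sharpen the count of these extra zeros by a Milloux-type fundamental inequality, in which the zeros of $f^{(k)}$ off the zeros of $f$ are charged against the subtracted term $N(r,\frac{1}{f^{(k+1)}})$ (equivalently, by replacing $N(r,\frac1f)$ with the truncated counting $N_{k+1}(r,\frac1f)$): for $f=g^{k+1}$ each zero of $g$ of multiplicity $m$ contributes to $f^{(k+1)}$ with the \emph{exact} multiplicity $(k+1)(m-1)$, so the perfect-power structure forces the extra-zero term to be negligible. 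I expect this quantitative elimination of $\overline N_0(r,\frac1P)$ to be the heart of the matter, and it is exactly the point where the value-distribution machinery of \cite{He} is needed.

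Finally, once $\overline N_0(r,\frac1P)=o(T)$ is secured, the displayed chain collapses to the rigid equality regime $\overline N(r,\frac1g)=T(r,g)+o(T)$ with every zero of $g$ asymptotically simple. To extract the contradiction I would use that, $P-a$ having finitely many zeros, the Weierstrass factorization $P-a=Qe^{\beta}$ holds with $Q$ a polynomial and $\beta$ entire (indeed $\beta$ a polynomial when $g$ has finite order, e.g.\ for the order-$\leqslant 2$ rescaling limits produced by Lemma~\ref{L1}); the differential identity $(g^{k+1})^{(k)}=a+Qe^{\beta}$ is then a strong constraint. Comparing orders of vanishing at the zeros of $g$ (where the left side vanishes but $a\neq0$, so $Qe^{\beta}$ must match delicately) and invoking transcendence of $g$ should contradict the borderline equality, completing the proof. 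The finite-order case thus admits a fairly hands-on route, while the general transcendental case rests on the sharp extra-zero estimate discussed above.
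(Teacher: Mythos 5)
Your attempt has a genuine gap, and it is worth noting first that the paper itself does not prove Lemma \ref{Hen}: it is quoted directly from Hennekemper's paper \cite{He}, so the statement you were asked to prove \emph{is} the theorem of \cite{He}. Your opening reductions are correct (the entire case of Lemma \ref{L3} with $q=1$, $d(P)=k+1$, $\theta(P)=k$ indeed gives $T(r,g)\leqslant \overline{N}(r,\frac{1}{g})+o(T(r,g))$ under the finiteness assumption, and the estimate $T(r,f)\leqslant T(r,P)+k\overline{N}(r,\frac{1}{f})+o(T(r,f))$ via the Lemma on Logarithmic Derivative is sound), but, as you yourself concede, they terminate in the tautology $T(r,f)\leqslant T(r,f)+\overline{N}_0(r,\frac{1}{P})+o(T(r,f))$. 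Everything then hinges on eliminating the term $\overline{N}_0$ counting zeros of $f^{(k)}$ away from zeros of $f$, and at exactly this point you write that you ``expect'' a Milloux-type inequality plus the perfect-power structure to do the job, and that this ``is exactly the point where the value-distribution machinery of \cite{He} is needed.'' That is the missing proof, not a proof: invoking \cite{He} there is circular. Nor does the Milloux route obviously close the gap, because the case is genuinely borderline: with $F=g^{k+1}$, a zero of $g$ of multiplicity $m$ gives $F$ a zero of multiplicity $(k+1)m$ and $F^{(k+1)}$ one of multiplicity at least $(k+1)(m-1)$, so $N(r,\frac{1}{F})-N(r,\frac{1}{F^{(k+1)}})\leqslant (k+1)\overline{N}(r,\frac{1}{g})\leqslant (k+1)T(r,g)+O(1)=T(r,F)+O(1)$; the main terms cancel exactly and no contradiction falls out of the standard inequality. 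This borderline cancellation is precisely why the case $n=k+1$ of Theorem~B does not follow from Corollary \ref{H2} and required Hennekemper's separate work.

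The endgame is likewise only a sketch. Even granting $\overline{N}_0(r,\frac{1}{P})=o(T(r,f))$, your ``rigid equality regime'' ($N(r,\frac{1}{f})=T(r,f)+o(T(r,f))$, almost all zeros of $g$ simple) is not by itself contradictory, and the proposed finish --- factor $P-a=Qe^{\beta}$ and ``compare orders of vanishing,'' which ``should contradict'' the equalities --- is never carried out. Moreover, your parenthetical reduction to polynomial $\beta$ covers only finite order, whereas the lemma is asserted for \emph{every} transcendental entire $g$: the order bound coming from Lemma \ref{L1} and Remark \ref{R1} applies to the Zalcman limit function in the paper's application of the lemma, not to the function in the lemma's hypothesis, so a finite-order argument would not prove the statement as given. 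In short, your framework is reasonable and your self-diagnosis of where the difficulty sits is accurate, but the two decisive steps --- the quantitative elimination of the extra zeros of $f^{(k)}$ and the derivation of a contradiction in the borderline regime --- are asserted as expectations rather than proved, so the proposal does not establish the lemma.
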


\noindent {\bf Proof of Theorem~\ref{add}.}
Without loss the generality, we may assume that $D$ is the unit disc. Suppose that $\mathcal F$ is not normal at $z_0\in D.$  Then, by Lemma~\ref{L1},  for $\alpha =\frac{k}{k+1}$ there exist

$1)$ a real number $r,\;0<r<1,$

$2)$ points $z_v,\;|z_v|<r,$ $z_v\to z_0,$

$3)$ positive numbers $\rho_v,\rho_v\to 0^+,$

$4)$ functions $f_v,\;f_v\in\mathcal F$

such that
\begin{align}\label{ad1}
g_v(\xi)=\frac{f_v(z_v+\rho_v\xi)}{\rho_v^\alpha}\to g(\xi)
\end{align} 
spherically uniformly on compact subsets of $\C,$ where $g(\xi)$ is a non-constant holomorphic function and $g^{\#}(\xi)\leqslant g^{\#}(0)=1.$

\noindent Therefore 
\begin{align*}(f_v^{k+1})^{(k)}(z_v+\rho_v\xi)=\big((\frac{f_v(z_v+\rho_v\xi)}{\rho_v^\alpha})^{k+1}\big)^{(k)}\\
=\big(g_v^{k+1}(\xi)\big)^{(k)}\to(g^{k+1}(\xi))^{(k)}
\end{align*}
spherically uniformly on compact subsets of $\C.$

By Hurwitz's theorem either $(g^{k+1})^{(k)}\equiv a$, either $(g^{k+1})^{(k)}\ne a.$ On the other hand, it is easy to see that there exists $z_0$ such that  $(g^{k+1})^{(k)}(z_0)= a$ (the case where $g$ is a nonconstant polynomial is trivial and the case where $g$ is transcendental follows from Lemma~\ref{Hen}). Hence, $(g^{k+1})^{(k)}\equiv a.$ Therefore $g$ has no zero point. Hence, by Lemma~\ref{L2},  $g(\xi)=e^{c\xi+d},\; c\ne 0.$ Then $a\equiv (g^{k+1})^{(k)}(\xi)\equiv ((k+1)c)^ke^{(k+1)(c\xi+d)},$ which is impossible.\hfill$\Box$
\\

\end{document}